\newtheorem{theorem}{Theorem}[section]
\newtheorem{lemma}[theorem]{Lemma}
\newtheorem{proposition}[theorem]{Proposition}
\theoremstyle{definition}
\theoremstyle{remark}
\newtheorem{remark}[theorem]{Remark}
\numberwithin{equation}{section}
\newcommand{\ba}{\begin{array}}
\newcommand{\ea}{\end{array}}
\newcommand{\f}{\frac}
\newcommand{\la}{\lambda}
\newcommand{\R}{{\mathbb R}}
\newcommand{\ds}{\displaystyle}
\begin{document}
\date{}
\title{ \bf\large{Global dynamics of a Lotka-Volterra competition patch model}\thanks{S. Chen is supported by National Natural Science Foundation of China (Nos. 12171117 and 11771109) and Shandong Provincial Natural Science Foundation of China (No. ZR2020YQ01),  J. Shi is supported by US-NSF grant DMS-1715651 and DMS-1853598, and Z. Shuai is supported by US-NSF grant DMS-1716445. 
}}
\author{
Shanshan Chen\footnote{Email: chenss@hit.edu.cn}\\[-1mm]
{\small Department of Mathematics, Harbin Institute of Technology}\\[-2mm]
{\small Weihai, Shandong 264209, P. R. China}\\[2mm]
Junping Shi\footnote{ Email: jxshix@wm.edu}\\[-1mm]
{\small Department of Mathematics, William \& Mary}\\[-2mm]
{\small Williamsburg, Virginia 23187-8795, USA}\\[2mm]
Zhisheng Shuai\footnote{Email: shuai@ucf.edu}\\[-1mm]
{\small Department of Mathematics, University of Central Florida}\\[-2mm]
{\small Orlando, Florida 32816, USA}\\[2mm]
Yixiang Wu\footnote{Corresponding Author, Email: yixiang.wu@mtsu.edu} \\[-1mm]
{\small Department of Mathematics, Middle Tennessee State University}\\[-2mm]
{\small Murfreesboro, Tennessee 37132, USA}
}

\maketitle

\newpage

\begin{abstract}
The global dynamics of the two-species Lotka-Volterra competition patch model with asymmetric dispersal is classified under the assumptions of weak competition and  the weighted digraph of the connection matrix is strongly connected and cycle-balanced. { We show} that in the long time, either the competition exclusion holds that one species becomes extinct, or the two species reach a coexistence equilibrium, and the outcome of the competition is determined by the strength of the inter-specific competition and the dispersal rates. Our main techniques in the proofs follow the theory of monotone dynamical systems and a graph-theoretic approach based on the Tree-Cycle identity.  \\[2mm]
\noindent {\bf Keywords}: Lotka-Volterra competition patch model; global dynamics; weighted digraph; monotone dynamical system.\\[2mm]
\noindent {\bf MSC 2020}: 92D25, 92D40, 34C12, 34D23, 37C65.
\end{abstract}

\section{Introduction}

Spatial dispersal of organisms in a heterogeneous environment with uneven resource distribution and varying connectivity have long been recognized as key components of ecological interactions \cite{Cantrell1991,Cantrell1998,hanski1999habitat}. Various models { with multiple patches} (or metapopulation models) have been proposed to investigate the impact of the environmental heterogeneity and the connectivity of subregions on the population dynamics.
{ For example, metapopulation models have been used to study the effects of habitat fragmentation on biodiversity \cite{fahrig2003effects}, the connectivity of habitat patches on population survival and extinction \cite{fahrig1985habitat}, the impact of dispersal on species richness and species relative abundances \cite{mouquet2003community}, and the outcome of multi-species interactions such as competition \cite{tilman1994competition} and predation  \cite{holt1977predation}, etc. }
Typically a patch model { consists} of a system of ordinary differential equations with local dynamics in  each patch coupled with dispersal dynamics between patches. 

A prominent example of patch models is the Lokta-Volterra competition model on a set of discrete set of habitats \cite{hastings1982dynamics, levin1974dispersion, Takeuchi1996}. In this paper, we consider the following $n$-patch two-species Lotka-Volterra competition model:
\begin{equation}\label{main}
\begin{cases}
u_i'=\mu_u\ds\sum_{j=1}^{n}(a_{ij}u_j-a_{ji}u_i)+u_i(p_i-u_i-cv_i), &i=1,\dots,n, \; t>0,\\
v_i'=\mu_v\ds\sum_{j=1}^{n}(a_{ij}v_j-a_{ji}v_i)+v_i(q_i-bu_i-v_i),&i=1,\dots,n, \; t>0, \\
u(0)=u_0\ge(\not\equiv)\bm 0,\;v(0)=v_0\ge(\not\equiv)\bm0.
\end{cases}
\end{equation}
Here $u=(u_1,\dots,u_n)$ and $v=(v_1,\dots,v_n)$ represent the population densities of two competing species in $n$ patches, respectively; $n$ is an integer greater or equal to $2$; $p_i, q_i>0$ measure the intrinsic growth rates of species $u_i, v_i$ in patch $i$, respectively; $b, c>0$ are the inter-specific competition rates of the two species, and the intra-specific competition rates are rescaled to be $1$ in \eqref{main}; $\mu_u,\mu_v\ge0$ are the dispersal rates of the two species, respectively; and the matrix $A=(a_{ij})_{n\times n}$ describes the movement pattern between patches where $a_{ij}\ge 0$ ($i\not=j$) is the degree of movement from patch $j$ to patch $i$. In previous studies of \eqref{main} \cite{hastings1982dynamics, levin1974dispersion, Takeuchi1996}, it was often assumed that $a_{ij}=a_{ji}$ so the dispersal is symmetric, which is not necessarily assumed here.

For our purpose, let $L=(L_{ij})_{n\times n}$ denote the {\it connection matrix} for the model, where
\begin{equation}
L_{ij}=\begin{cases}
 a_{ij},&i\ne j,\\
-\ds\sum_{k\ne i} a_{ki},&i=j.
\end{cases}
\label{eq-connection}\end{equation}
Thus \eqref{main} can be re-written as
\begin{equation}\label{main1}
\begin{cases}
u_i'=\mu_u\ds\sum_{j=1}^{n}L_{ij}u_j+u_i(p_i-u_i-cv_i), &i=1,\dots,n, t>0,\\
v_i'=\mu_v\ds\sum_{j=1}^{n}L_{ij}v_j+v_i(q_i-bu_i-v_i),&i=1,\dots,n, t>0, \\
u(0)=u_0\ge(\not\equiv)\bm0,\;v(0)=v_0\ge(\not\equiv)\bm0.
\end{cases}
\end{equation}

The global dynamics of \eqref{main1} when $n=1$ is well known, and the parameter range of $bc\leq 1$ is often referred as the weak competition { regime}. 
Throughout out this paper, we assume
\begin{enumerate}
    \item[(A1)] $b>0$, $c>0$, and $0<bc\le 1$; $p_i, q_i>0$ for all $i=1, 2, ..., n$.
    \item[(A2)] The connection matrix $L$ as defined in \eqref{eq-connection} is irreducible.
\end{enumerate}
The assumption (A1) means that the competition between the two species is weak while (A2) means that the digraph $\mathcal{G}$ associated with $A$ (also { $L$}) is strongly connected. 
Our results also assume that
\begin{enumerate}
    \item[(A3)]   The weighted digraph $\mathcal{G}$ associated with $L$ is cycle-balanced.
\end{enumerate}
The definition of the weighted digraph $\mathcal{G}$ and cycle-balanced will be given in Section 2. 
We note that each of the following is a special case of (A3):
\begin{enumerate}
        \item[(A3a)] $L$ is symmetric;
        \item[(A3b)] $n=2$;
        \item[(A3c)] Every cycle of the weighted digraph $\mathcal{G}$ associated with $L$ has two vertices.
\end{enumerate}
{ The dynamics of \eqref{main1} when $L$ is symmetric has recently been considered in  \cite{Slavik2020}.}


Under the assumptions (A1)-(A3), { we show} that { any positive equilibrium (or coexistence equilibrium)} of \eqref{main1} is linearly stable except a special case when $bc=1$ (see Theorem \ref{theorem_stable}). Together with the theory of monotone dynamical systems \cite{hess,hsu1996competitive, smith2008monotone}, {we show that the following alternatives hold} for the global dynamics of \eqref{main1}: either there exists a unique positive coexistence equilibrium of \eqref{main1} that is globally asymptotically stable; or \eqref{main1} has no coexistence equilibrium  and one of the two semitrivial equilibria is globally asymptotically stable while the other one is unstable (see Theorem \ref{theorem-class}).

The results established here resemble the corresponding ones for the reaction-diffusion-advection Lokta-Volterra competition model on a continuous spatial domain:
\begin{equation}\label{RD}
\begin{cases}
u_t=\mu_u\Delta u-\alpha_u \nabla \cdot [u\nabla Q(x)]+u(p(x)-u-cv), &x\in\Omega,\; t>0,\\
v_t=\mu_v\Delta v-\alpha_v \nabla \cdot [v\nabla Q(x)]+v(q(x)-bu-v), &x\in\Omega,\; t>0,\\
\ds\mu_u\frac{\partial u}{\partial n}-\alpha_u u \frac{\partial Q(x)}{\partial n} =\mu_v\ds\frac{\partial v}{\partial n}-\alpha_v v \frac{\partial Q(x)}{\partial n}=0, &x\in\partial\Omega,\; t>0,\\
u(x, 0)=u_0\ge0,\;v(x, 0)=v_0\ge0, &x\in\partial\Omega.
\end{cases}
\end{equation}
Here $u(x, t)$ and $v(x, t)$ are the population densities of two competing species at location $x \in \Omega$ and time $t$ respectively; { $Q$ is used to describe the advection direction of the species; $\alpha_u$ and $\alpha_v$ are the advection rates}; the habitat $\Omega$ is a connected bounded smooth domain in $\R^N$ for $N\ge 1$; { $n$ is the outward unit normal vector of the boundary $\partial\Omega$}. The combined effect of dispersal rates and environmental heterogeneity on the global dynamics of \eqref{RD} have been studied extensively in recent years in, for example,  \cite{dockery1998evolution,he2013effects,he2016global,hutson2005convergence,hutson2003competing,hutson2003evolution,lam2012uniqueness,Lou,Ni2020}. For the diffusive case of $\alpha_u=\alpha_v=0$ (corresponding to symmetric $L$ in \eqref{main1}), the global dynamics of \eqref{RD} with weak competition $bc\leq 1$ has been completely classified in \cite{he2016global}; 
and a similar classification for the general case of $\alpha_u,\alpha_v\ne 0$ (corresponding to asymmetric $L$ in \eqref{main1}) was also achieved in \cite{zhou2018global}  under the assumption of $\mu_u/\alpha_u=\mu_v/\alpha_v>0$. We refer interested readers to the  review articles \cite{lam2020selected,lou2008some} and books \cite{cantrell2004spatial, hess, ni2011mathematics} for more results for \eqref{RD}. { Our intention to prove that every coexistence equilibrium if exists is linearly stable except for the case when $bc=1$ is motivated by \cite{he2016global,zhou2018global}, and our classification results are stated in a similar fashion as those in \cite{he2016global,zhou2018global}. Here, we utilize the graph-theoretic approach in \cite{li2010global} to bridge the gap between symmetry and asymmetry of the connection matrix $L$. We remark that the patch model \eqref{main1} may not be considered simply as a discretization of the reaction-diffusion equation models in \cite{he2016global,zhou2018global}, especially when $L$ is not symmetric. For example, the patches configured as a directed tree graph describing the dynamics of species in river networks (see e.g. Figure~\ref{fig2}(c) ) is within the scope of our considerations which seems not to be covered by reaction-diffusion models in \cite{he2016global,zhou2018global}.   }

The dynamics of \eqref{main1} with $n=2$ (two-patch model) with $b=c$ and $\mu_u=\mu_v=\mu$ has been studied in \cite{cheng2019coexistence,gourley2005two,lin2014global}, see also recent work \cite{Jiang2020BMB} for three-patch case.   Our results here state that when $n=2$, the global dynamics of the model {\eqref{main1}}  is completely determined by the local dynamics of the semitrivial equilibria. Moreover, we generalize the results in \cite{lin2014global} to the case $n>2$ with cycle-balanced condition, and prove that there is a globally asymptotically stable positive equilibrium when the diffusion rate is small while one semitrivial equilibrium out-competes the other one when the diffusion rate is large. { Our results also extend the ones in \cite{Slavik2020} which assumes $L$ to be symmetric (this work also considers the case when the competition is strong while we do not consider it here).}

Our paper is organized as follows. In Section 2, we present some preliminary results, and in Section 3, we classify the global dynamics of \eqref{RD} for the general case. In Section 4, we apply the general results to two special cases for more detailed dynamics: (1) (equal resource) $p_i=q_i$ for all $i=1, 2,...,n$; (2) (equal competitiveness and equal diffusivity)  $b=c=1$ and $\mu_u=\mu_v=\mu$.  { In Section 5, we give a conclusion. }

\section{Preliminaries}

{In this section,  we provide the necessary preliminary results, which are important to the proof of our main results, the stability of coexistence equilibrium and the global dynamics of \eqref{main1},  in Section \ref{section_main}.  In Section \ref{section_graph}, we present some results from matrix theory and graph theory, which are used to deal with the asymmetry of the connection matrix $L$ in the proof of Theorems \ref{theorem_stable}-\ref{theorem-class}.  In Section \ref{section_dynamical}, we state some well-known results about the single-species model and the strict monotone theory. In particular, the strict monotone theory results state that the global dynamics of the competition system \eqref{main1} is largely determined by the local dynamics of the steady states. }

\subsection{Matrices and graphs}\label{section_graph}

A vector $u=(u_1,\cdots,u_n)\gg\bm0$ ($u\ge\bm0$) means that every entry of $u$ is positive (nonnegative); a vector $u>\bm0$ if $u\ge \bm0$ and $u\neq \bm0$. Let $A=(a_{ij})_{n\times n}$ be an $n\times n$ matrix and let $\sigma(A)$ be the set of eigenvalues of $A$.
The {\it spectral bound} $s(A)$ of $A$ is defined as
$$
s(A)=\max\{{\rm Re} \lambda: \lambda\in\sigma(A)\}.
$$
The matrix $A$ is {\it reducible} if we may partition $\{1, 2, ..., n\}$ into two nonempty subsets $E$ and $F$ such that $a_{ij}=0$ for all $i\in E$ and $j\in F$. Otherwise $A$ is {\it irreducible}. 

A {\it weighted digraph} $\mathcal{G}=(V, E)$ associated with the matrix $A$ (denoted as $\mathcal{G}_A$ in short) consists of a set $V=\{1,2,\ldots,n\}$ of vertices and a set $E$ of arcs $(i,j)$ (i.e., directed edges from $i$ to $j$) with weight $a_{ji}$, where $(i,j) \in E$ if and only if $a_{ji}>0$, $i\not=j$. A digraph is \textit{strongly connected} if, for any ordered pair of distinct vertices $i,j$, there exists a directed path from $i$ to $j$. A weighted digraph $\mathcal{G}_A$ is strongly connected if and only if the weight matrix $A$ is irreducible \cite{berman1979nonnegative}. A list of distinct vertices $i_1, i_2, ..., i_k$ with $k\ge 2$ form a {\it directed cycle} if $(i_m, i_{m+1})\in E$ for all $m=1, 2, ..., k-1$ and $(i_k, i_1)\in E$.

A subdigraph $\mathcal{H}$ of $\mathcal{G}$ is \textit{spanning} if $\mathcal{H}$ and $\mathcal{G}$ have the same vertex set. The \textit{weight} of a subdigraph $\mathcal{H}$ is the product of the weights of all its arcs.
A connected subdigraph $\mathcal{T}$ of $\mathcal{G}$ is a {\it rooted out-tree} if it contains no directed cycle, and there is one vertex, called the root, that is not { a} terminal vertex of any arcs while each of the remaining vertices is a terminal vertex of exactly one arc. A subdigraph $\mathcal{Q}$  of $\mathcal{G}$ is {\it unicyclic} if it is a disjoint union of two or more rooted out-trees whose roots are connected to form a directed cycle. 
Every vertex of unicyclic digraph $\mathcal{Q}$ is a terminal vertex of exactly one arc, and thus a unicyclic digraph has also been called a {\it contra-functional digraph} \cite[page 201]{harary}.

A square matrix is called a row (column)  \emph{Laplacian} matrix if all the off-diagonal entries are nonpositive and the sum of each row (column) is zero. For \eqref{main1}, we associate it with a row Laplacian matrix: 
\begin{equation}\label{laplaces}\mathcal{L}
=\begin{pmatrix}
\sum\limits_{k\ne1} a_{1k} & -a_{12}& \cdots &  -a_{1n}\\
-a_{21}& \sum\limits_{k\ne2}a_{2k}& \cdots &  -a_{2n}\\
\vdots & \vdots & \ddots & \vdots\\
-a_{n1}& -a_{n2}& \cdots &  \sum\limits_{k\ne n}a_{nk}\\
\end{pmatrix}.
\end{equation}
Note that for the connection matrix $L$ defined in \eqref{eq-connection}, $-L$ is a column Laplacian matrix, and the off-diagonal entries of $L$ and $-\mathcal{L}$ are the same.
Let $\alpha_i\ge 0$ denote the cofactor of the $i$-th diagonal element of $\mathcal{L}$. Then 
\begin{equation}\label{eq-ev}
(\alpha_1, \alpha_2, \ldots, \alpha_n)
\quad \mbox{and} \quad (1,1,\ldots,1)^T
\end{equation} 
are the left and right eigenvectors of $\mathcal{L}$ corresponding to eigenvalue $0$, respectively. If $L$ is irreducible (equivalently, the digraph $\mathcal{G}$ associated with $A$ is strongly connected), then $\mathcal{L}$ is also irreducible and thus $\alpha_i>0$ for all $i$. We will also call $\mathcal{G}$ as the digraph associated with $\mathcal{L}$, and $\mathcal{L}$ as the Laplacian matrix of $\mathcal{G}$ throughout this paper. 

The following Tree-Cycle identity has been established in \cite[Theorem 2.2]{li2010global}. 

\begin{proposition}[Tree-Cycle Identity]
\label{TC} Let  $\mathcal{G}$ be a strongly connected weighted digraph and let $\mathcal{L}$ be the Laplacian matrix of $\mathcal{G}$ as defined in \eqref{laplaces}. Let $\alpha_i$ denote the cofactor of the $i$-th diagonal element of $\mathcal{L}$. Then the following identity holds for $x_i,x_j\in D\subset \mathbb{R}^N, 1\le i,j\le n$ and any family of functions $\{F_{ij}: D \times D \to \mathbb{R}\}_{1\leq i,j\leq n}$
\begin{equation}\label{eq-tc}
\sum_{i=1}^n\sum_{j\not= i, j=1}^n   \alpha_i L_{ij} F_{ij}(x_i, x_j)= \sum_{\mathcal{Q} \in \mathbb{Q}} w(\mathcal{Q}) \sum_{(s,r)\in E(\mathcal{C}_{\mathcal{Q}})} F_{sr}(x_s, x_r),
\end{equation} 
where $\mathbb{Q}$ is the set of all spanning unicyclic digraphs of $(\mathcal{G}, {A})$, $w(\mathcal{Q})>0$ is the weight of $\mathcal{Q}$ (the product of weights of all directed edges on $\mathcal{Q}$), and $\mathcal{C}_\mathcal{Q}$ denotes the directed cycle of $\mathcal{Q}$ with arc set $E(\mathcal{C}_{\mathcal{Q}})$.
\end{proposition}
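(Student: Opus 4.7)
The plan is to derive the identity \eqref{eq-tc} as a direct consequence of the Matrix-Tree Theorem (Kirchhoff's Theorem for weighted digraphs) combined with a bijective reorganization between ``spanning rooted tree plus one extra arc'' configurations and spanning unicyclic subdigraphs of $\mathcal{G}$; this is essentially the argument of \cite[Theorem 2.2]{li2010global}, which the proposition cites.

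First, I would invoke the Matrix-Tree Theorem to represent each cofactor $\alpha_i$ combinatorially. With the orientation and sign conventions built into \eqref{laplaces}, one obtains
\[
\alpha_i \;=\; \sum_{\mathcal{T}\in\mathbb{T}_i} w(\mathcal{T}),
\]
where $\mathbb{T}_i$ denotes the set of spanning rooted out-trees of $\mathcal{G}$ at $i$ in the sense defined in the text, and $w(\mathcal{T})$ is the product of the arc weights in $\mathcal{T}$. Strong connectivity of $\mathcal{G}$ yields $\mathbb{T}_i\neq\emptyset$ and hence $\alpha_i>0$, consistent with the assertion following \eqref{eq-ev}. A quick sanity check in the cases $n=2,3$ (comparing the out-tree sum directly against the explicit $2\times 2$ or $3\times 3$ cofactor) confirms that the orientation conventions match.

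Second, for each ordered pair $i\neq j$, I would view the product $\alpha_i L_{ij}=\alpha_i a_{ij}$ as a weighted sum over subdigraphs $\mathcal{T}\cup\{e\}$, where $\mathcal{T}$ ranges over $\mathbb{T}_i$ and $e$ is the single arc of $\mathcal{G}$ carrying the weight $a_{ij}$. The combinatorial crux is that $\mathcal{T}\cup\{e\}$ is automatically a spanning unicyclic digraph in $\mathbb{Q}$: adjoining one extra arc to an out-tree at $i$ creates exactly one directed cycle, namely $e$ together with the unique tree-path in $\mathcal{T}$ between the endpoints of $e$, while every vertex remains the terminal of exactly one arc. In particular $w(\mathcal{T}\cup\{e\})=w(\mathcal{T})\,a_{ij}=w(\mathcal{Q})$ for $\mathcal{Q}:=\mathcal{T}\cup\{e\}$.

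Third, I would run this construction in reverse: for every $\mathcal{Q}\in\mathbb{Q}$ and every cycle arc $e$ of $\mathcal{C}_{\mathcal{Q}}$, removing $e$ from $\mathcal{Q}$ leaves a spanning rooted out-tree whose root is determined by the endpoints of $e$. Hence the double sum on the left-hand side of \eqref{eq-tc} can be reindexed by pairs $(\mathcal{Q},e)$ with $e\in E(\mathcal{C}_{\mathcal{Q}})$; each such pair contributes $w(\mathcal{Q})$ multiplied by an $F$-term whose subscripts and arguments are read off from the endpoints of $e$. Pooling the contributions of all cycle arcs of a fixed $\mathcal{Q}$ produces the inner sum on the right-hand side of \eqref{eq-tc}, and summing over $\mathcal{Q}\in\mathbb{Q}$ yields the identity.

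The main obstacle I anticipate is purely bookkeeping: under the paper's convention that arc $(i,j)$ of $\mathcal{G}$ carries weight $a_{ji}$, one must track carefully which of the two indices on $F_{ij}$ corresponds to the initial vertex and which to the terminal vertex of the underlying arc, so that the subscripts on the left-hand side match the cycle-arc labels on the right-hand side. Once this pairing is pinned down, the identity reduces to a purely combinatorial rearrangement of a finite sum and requires no analytic input beyond the Matrix-Tree Theorem.
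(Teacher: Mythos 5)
Your strategy is the right one and is in fact how the cited source proves this result: the paper itself offers no proof of Proposition~\ref{TC}, deferring entirely to \cite[Theorem 2.2]{li2010global}, and that proof is exactly your combination of the Matrix-Tree Theorem (giving $\alpha_i=\sum_{\mathcal{T}\in\mathbb{T}_i}w(\mathcal{T})$ over spanning rooted out-trees with root $i$) with the bijection between pairs $(\mathcal{T},e)$, where $\mathcal{T}\in\mathbb{T}_i$ and $e$ is an arc entering $i$, and pairs $(\mathcal{Q},e)$ with $\mathcal{Q}\in\mathbb{Q}$ and $e\in E(\mathcal{C}_{\mathcal{Q}})$. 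Your checks that $\mathcal{T}\cup\{e\}$ is spanning unicyclic with $e$ on its cycle, and that deleting a cycle arc from $\mathcal{Q}$ returns a spanning rooted out-tree rooted at the terminal vertex of that arc, are both correct.

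However, the bookkeeping you explicitly postpone is not a formality; it is the only step at which the argument can go wrong, and you must carry it out to finish. Doing so: the term $\alpha_i L_{ij}F_{ij}(x_i,x_j)$ has $L_{ij}=a_{ij}$, the weight of the arc $(j,i)$ running from $j$ into the root $i$; hence the pair $(\mathcal{Q},e)$ with cycle arc $e=(s,r)$ (from $s$ to $r$) arises from the left-hand term with $i=r$, $j=s$, and contributes $w(\mathcal{Q})\,F_{rs}(x_r,x_s)$. The inner summand on the right-hand side should therefore be $F_{rs}(x_r,x_s)$, as in \cite[Theorem 2.2]{li2010global}, not $F_{sr}(x_s,x_r)$ as printed in \eqref{eq-tc}. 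The printed version is false in general: for the pure $3$-cycle $1\to2\to3\to1$ one has $\alpha_1=a_{21}a_{32}$, $\alpha_2=a_{13}a_{32}$, $\alpha_3=a_{13}a_{21}$, so the left side equals $a_{13}a_{21}a_{32}\bigl(F_{13}(x_1,x_3)+F_{21}(x_2,x_1)+F_{32}(x_3,x_2)\bigr)$, whereas \eqref{eq-tc} as written gives $a_{13}a_{21}a_{32}\bigl(F_{12}(x_1,x_2)+F_{23}(x_2,x_3)+F_{31}(x_3,x_1)\bigr)$. This transposition is harmless for the rest of the paper, since Theorem~\ref{thm-sy} only ever uses the symmetrized combination $F_{sr}(x_s,x_r)+F_{rs}(x_r,x_s)$ summed over cycle arcs, which is the same under either convention; but your proof is only complete once you pin the order down, and when you do, you should state the identity with $F_{rs}(x_r,x_s)$ on the right.
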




We recall that a weighted digraph $\mathcal{G}$ is said to be {\it cycle-balanced} \cite[Section~3]{li2010global} if for any cycle $\mathcal{C}$ in  $\mathcal{G}$ it has a corresponding reversed cycle $-\mathcal{C}$ and $w(\mathcal{C})=w(-\mathcal{C})$. Here $-\mathcal{C}$, the reverse of $\mathcal{C}$, have the same vertices but edges with {reversed} direction as $\mathcal{C}$.

The following result illustrates how the Tree-Cycle Identity can be used to bridge the gap between symmetry and asymmetry, and will be used later to analyze the eigenvalue problems related to equilibrium stability.

\begin{theorem}\label{thm-sy}
Let  $\mathcal{G}$ be a strongly connected weighted digraph that is cycle-balanced, and let $\mathcal{L}$ be the Laplacian matrix of $\mathcal{G}$ as defined in \eqref{laplaces}. Let $\alpha_i$ denote the cofactor of the $i$-th diagonal element of $\mathcal{L}$. Assume that $x_i,x_j\in D\subset \mathbb{R}^N$ for all $1\le i,j\le n$ and   $\{F_{ij}: D \times D \to \mathbb{R}\}_{1\leq i,j\leq n}$  be a family of functions satisfying 
\begin{equation}\label{eq-ineq}
    F_{ij}(x_i,x_j) + F_{ji}(x_j ,x_i) \ge 0, \quad 1\le i, j \le n, \;j\ne i.
\end{equation}
Then the following holds
\begin{equation}\label{eq-ds}
    \sum_{i=1}^n \sum_{j\ne i, j=1}^n \alpha_i L_{ij} F_{ij}(x_i,x_j) \, = \sum_{i=1}^n \sum_{j\ne i, j=1}^n \alpha_i L_{ij} F_{ji}(x_j,x_i) \ge 0.
\end{equation}
In addition, the double sum in \eqref{eq-ds} equals 0 if and only if $F_{ij}(x_i,x_j)+F_{ji}(x_j,x_i)=0$ for all distinct $i,j$.
\end{theorem}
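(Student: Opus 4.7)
The plan is to apply the Tree-Cycle Identity (Proposition \ref{TC}) twice and then exploit an involution on the set $\mathbb{Q}$ of spanning unicyclic subgraphs supplied by the cycle-balanced hypothesis. Writing $S_1$ and $S_2$ respectively for the left and middle double sums in \eqref{eq-ds}, a first application of Proposition \ref{TC} to the family $\{F_{ij}\}$ gives
\begin{equation*}
S_1 = \sum_{\mathcal{Q} \in \mathbb{Q}} w(\mathcal{Q}) \sum_{(s,r) \in E(\mathcal{C}_{\mathcal{Q}})} F_{sr}(x_s, x_r).
\end{equation*}
Setting $G_{ij}(u,v) := F_{ji}(v, u)$ we have $S_2 = \sum_{i \ne j} \alpha_i L_{ij} G_{ij}(x_i, x_j)$, so a second application of Proposition \ref{TC} yields
\begin{equation*}
S_2 = \sum_{\mathcal{Q} \in \mathbb{Q}} w(\mathcal{Q}) \sum_{(s,r) \in E(\mathcal{C}_{\mathcal{Q}})} F_{rs}(x_r, x_s).
\end{equation*}

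Next, I would define an involution $\mathcal{Q} \mapsto \mathcal{Q}^*$ on $\mathbb{Q}$ by retaining the rooted out-trees of $\mathcal{Q}$ and replacing the cycle $\mathcal{C}_{\mathcal{Q}}$ by its reverse $-\mathcal{C}_{\mathcal{Q}}$. The cycle-balanced hypothesis guarantees that $-\mathcal{C}_{\mathcal{Q}} \subset \mathcal{G}$ with $w(-\mathcal{C}_{\mathcal{Q}}) = w(\mathcal{C}_{\mathcal{Q}})$, so $\mathcal{Q}^*$ is again a spanning unicyclic subgraph of $\mathcal{G}$ with $w(\mathcal{Q}^*) = w(\mathcal{Q})$, and $(\mathcal{Q}^*)^* = \mathcal{Q}$. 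Reindexing the $S_1$ sum via this involution and using that $(s, r) \in E(\mathcal{C}_{\mathcal{Q}^*})$ if and only if $(r, s) \in E(\mathcal{C}_{\mathcal{Q}})$, a relabeling of the dummy summation indices transforms the $S_1$ expression into precisely the $S_2$ expression, giving $S_1 = S_2$.

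With $S_1 = S_2$ established, the inequality follows from
\begin{equation*}
2 S_1 = S_1 + S_2 = \sum_{\mathcal{Q} \in \mathbb{Q}} w(\mathcal{Q}) \sum_{(s,r) \in E(\mathcal{C}_{\mathcal{Q}})} \bigl[F_{sr}(x_s, x_r) + F_{rs}(x_r, x_s)\bigr] \ge 0,
\end{equation*}
since each $w(\mathcal{Q}) > 0$ and each bracketed summand is nonnegative by \eqref{eq-ineq}. For the equality case, vanishing of $S_1$ forces every summand in this nonnegative decomposition to be zero; since $\mathcal{G}$ is strongly connected every arc of $\mathcal{G}$ lies on some directed cycle, and such a cycle extends to a spanning unicyclic subgraph by attaching out-trees from the remaining vertices, so $F_{ij}(x_i, x_j) + F_{ji}(x_j, x_i) = 0$ propagates to every pair of distinct indices appearing in \eqref{eq-ds}. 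The most delicate step is the construction and index bookkeeping for the involution $\mathcal{Q} \mapsto \mathcal{Q}^*$; once the cycle-balanced property is correctly unpacked, the rest is a transparent consequence of Proposition \ref{TC} and nonnegativity.
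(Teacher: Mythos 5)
Your proof is correct and follows essentially the same route as the paper's: both rest on the Tree--Cycle Identity (Proposition \ref{TC}) combined with the weight-preserving involution on spanning unicyclic subgraphs obtained by reversing the cycle, which is exactly where the cycle-balanced hypothesis enters. The only cosmetic difference is that you apply the identity twice and prove $S_1=S_2$ before averaging, whereas the paper symmetrizes a single application directly; your treatment of the equality case (noting that strong connectivity puts every arc on a cycle of some spanning unicyclic subgraph) is in fact spelled out more carefully than in the paper, which omits that discussion.
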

\begin{proof}
For any unicyclic digraph $\mathcal{Q}$ with a directed cycle $\mathcal{C}_\mathcal{Q}$, reversing the directions of all directed edges in $\mathcal{C}_\mathcal{Q}$ (keeping the directions of all other directed edges) yields another unicyclic digraph $\mathcal{Q}'$. Since $\mathcal{G}$ is cycle-balanced, $\mathcal{Q}'$ is well-defined and $w(\mathcal{Q}')=w(\mathcal{Q})$. Notice that $(s,r)\in E(\mathcal{C}_\mathcal{Q})$ iff $(r,s)\in E(\mathcal{C}_{\mathcal{Q}'})$, and $(s,r)\in E(\mathcal{Q})-E(\mathcal{C}_\mathcal{Q})$ iff $(s,r) \in E(\mathcal{Q}')-E(\mathcal{C}_{\mathcal{Q}'})$. Perform this process to all unicyclic digraph $\mathcal{Q}$ on the right hand side of the Tree-Cycle identity \eqref{eq-tc}, and we obtain
\begin{align*}
& \sum_{i=1}^n \sum_{j\ne i, j=1}^n \alpha_i L_{ij} F_{ij}(x_i,x_j)\\
=& \sum_{\mathcal{Q} \in \mathbb{Q}} w(\mathcal{Q}) \sum_{(s,r)\in E(\mathcal{C}_{\mathcal{Q}})} F_{sr}(x_s, x_r)\\
=& \frac{1}{2} \Big[ \sum_{\mathcal{Q} \in \mathbb{Q}} w(\mathcal{Q}) \sum_{(s,r)\in E(\mathcal{C}_{\mathcal{Q}})} F_{sr}(x_s, x_r) + \sum_{\mathcal{Q}' \in \mathbb{Q}} w(\mathcal{Q}') \sum_{(s,r)\in E(\mathcal{C}_{\mathcal{Q}'})} F_{sr}(x_s, x_r) \Big]\\
=& \frac{1}{2} \Big[ \sum_{\mathcal{Q} \in \mathbb{Q}} w(\mathcal{Q}) \sum_{(s,r)\in E(\mathcal{C}_{\mathcal{Q}})} F_{sr}(x_s, x_r) + \sum_{\mathcal{Q} \in \mathbb{Q}} w(\mathcal{Q}) \sum_{(r,s)\in E(\mathcal{C}_{\mathcal{Q}})} F_{sr}(x_s, x_r) \Big]\\
=& \frac{1}{2} \Big[ \sum_{\mathcal{Q} \in \mathbb{Q}} w(\mathcal{Q}) \sum_{(s,r)\in E(\mathcal{C}_{\mathcal{Q}})} \Big(F_{sr}(x_s, x_r) +  F_{rs}(x_r, x_s)\Big) \Big]\\
\ge & 0,
\end{align*}
where the last inequality follows from \eqref{eq-ineq}.
\end{proof}

Since $\mathcal{L}$ is irreducible, the null space of $\mathcal{L}$ is one-dimensional. As a consequence, for any positive left eigenvector $(e_1,e_2, \ldots, e_n)$ 
of $\mathcal{L}$ corresponding to eigenvalue $0$, there exists a constant $\eta>0$ such that $e_i=\eta \alpha_i$ for all $i$. Therefore, the coefficients $\alpha_i$ in Theorem~\ref{thm-sy} can be replaced by the coordinators $e_i$ of any positive left eigenvector of $\mathcal{L}$ corresponding to eigenvalue $0$.

Finally we show some necessary and sufficient conditions for a digraph to be cycle-balanced, and enclose the proof in Appendix. 

\begin{proposition}\label{tree}
Let  $\mathcal{G}$ be a strongly connected weighted digraph with $n$ vertices associated with $n\times n$ matrix $A$ (assuming $a_{ii}=0$ for $1\leq i\leq n$). 
\begin{enumerate}
    \item If $A$ is symmetric ($a_{ij}=a_{ji}$), then $\mathcal{G}$ is cycle-balanced; and if $\mathcal{G}$ is cycle-balanced, then $A$ is sign pattern symmetric, that is, $a_{ij}>0$ if and only if $a_{ji}>0$ for any $j\ne i$. 
    \item If  $\mathcal{G}$ is cycle-balanced, then ${\mathcal G}$ has at least $2(n-1)$ arcs.
   
    
    \item
    If every cycle of  $\mathcal{G}$  has exactly two vertices, then $\mathcal{G}$ is cycle-balanced; and every cycle of  $\mathcal{G}$  has exactly two vertices if and only if $A$ is sign pattern symmetric and ${\mathcal G}$ has exactly $2(n-1)$ arcs. In addition, these arcs form two spanning trees in $\mathcal{G}$ of the opposite direction: one rooted in-tree and one rooted out-tree. In particular, if $n=2$, then $\mathcal{G}$ is cycle-balanced.
    
    \item Suppose that $\mathcal{G}$  is a complete graph ($a_{ij}>0$ for any $i\ne j$) with at least $3$ vertices. Then $\mathcal{G}$ is cycle-balanced if and only if each $3$-cycle of $\mathcal{G}$ is balanced, that is, for any three distinct vertices $i,j,k$, we have $a_{ij}a_{jk}a_{ki}=a_{ik}a_{kj}a_{ji}$.
\end{enumerate}
\end{proposition}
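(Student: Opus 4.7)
The plan is to handle the four parts in order. Parts (1)--(3) reduce to combinatorial statements about the undirected skeleton of $\mathcal{G}$, while Part (4) requires an honest induction on cycle length.

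For Part (1), the forward direction is immediate from writing out the weight of a cycle $\mathcal{C}: i_1 \to i_2 \to \cdots \to i_k \to i_1$ as $a_{i_2 i_1} a_{i_3 i_2} \cdots a_{i_1 i_k}$ and that of $-\mathcal{C}$ as $a_{i_k i_1} a_{i_{k-1} i_k} \cdots a_{i_1 i_2}$, which agree term by term when $A=A^T$. For the converse, given an arc $(j,i)$ (so $a_{ij}>0$), strong connectivity supplies a directed path from $i$ back to $j$ that closes into a cycle $\mathcal{C}$; the cycle-balanced property forces the existence of $-\mathcal{C}$ with all arcs, and in particular $(i,j)$, carrying positive weights, yielding $a_{ji}>0$. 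Part (2) is then a corollary: Part (1) gives sign pattern symmetry, strong connectivity makes the skeleton connected, so the skeleton has at least $n-1$ undirected edges, each corresponding to two arcs in $\mathcal{G}$.

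For Part (3), a two-cycle is trivially its own reverse, so cycle-balanced is automatic whenever every cycle has two vertices. For the equivalence, the strong-connectivity argument of Part (1) again extracts sign pattern symmetry, and any cycle of length $\geq 3$ in the skeleton would lift to a directed cycle with more than two vertices; hence the skeleton is a tree with $n-1$ edges and $\mathcal{G}$ has $2(n-1)$ arcs. The reverse implication runs symmetrically: sign pattern symmetry and $2(n-1)$ arcs force the skeleton to be a spanning tree, which has no undirected cycles, so no directed cycle of length $\geq 3$. To exhibit the two spanning trees, I would pick any vertex $r$ as root and orient each skeleton edge toward $r$ to produce a rooted in-tree and away from $r$ to produce a rooted out-tree; the two orientations partition all $2(n-1)$ arcs. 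The $n=2$ case is then immediate since $\mathcal{G}$ contains only the 2-cycle $1 \to 2 \to 1$.

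The main obstacle is Part (4). The forward direction is trivial. For the converse, I induct on the cycle length $k$, with $k=3$ the hypothesis. For a cycle $\mathcal{C}: i_1 \to i_2 \to \cdots \to i_k \to i_1$ with $k \geq 4$, completeness guarantees $a_{i_1 i_3}, a_{i_3 i_1} > 0$, so I can split $\mathcal{C}$ at the chord $\{i_1, i_3\}$ into the 3-cycle $\mathcal{C}_3: i_1 \to i_2 \to i_3 \to i_1$ and the $(k-1)$-cycle $\mathcal{C}_{k-1}: i_1 \to i_3 \to i_4 \to \cdots \to i_k \to i_1$, with
\[
w(\mathcal{C}_3)\, w(\mathcal{C}_{k-1}) \;=\; w(\mathcal{C}) \cdot a_{i_1 i_3}\, a_{i_3 i_1}.
\]
The same chord decomposes $-\mathcal{C}$ into $-\mathcal{C}_3$ and $-\mathcal{C}_{k-1}$, so the inductive hypothesis gives $w(\mathcal{C}_3)=w(-\mathcal{C}_3)$ and $w(\mathcal{C}_{k-1})=w(-\mathcal{C}_{k-1})$, from which $w(\mathcal{C})=w(-\mathcal{C})$ follows by cancelling the common positive factor $a_{i_1 i_3} a_{i_3 i_1}$. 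The delicate point is to verify that decomposing $-\mathcal{C}$ along the same chord produces exactly the reverses of the sub-cycles produced from $\mathcal{C}$; once this bookkeeping is set up correctly the remaining algebra is a one-line cancellation.
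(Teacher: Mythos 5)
Your proof is correct and follows essentially the same route as the paper's: the same strong-connectivity argument yielding sign pattern symmetry in parts (1)--(3), and the same chord-splitting induction in part (4) (you split off the triangle at the chord $\{i_1,i_3\}$ where the paper splits at $\{1,m\}$, which is the identical computation after relabeling). The only cosmetic difference is that in part (3) you argue via the undirected skeleton being a spanning tree rather than the paper's explicit decomposition of $A$ into lower- and upper-triangular parts $A^{\pm}$; both rest on the same observation that any arc beyond a spanning tree and its reverse would create a cycle on at least three vertices.
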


\begin{figure}[htbp]
\centering
\subfiguretopcaptrue
\subfigure[]{\includegraphics[width=0.45\textwidth]{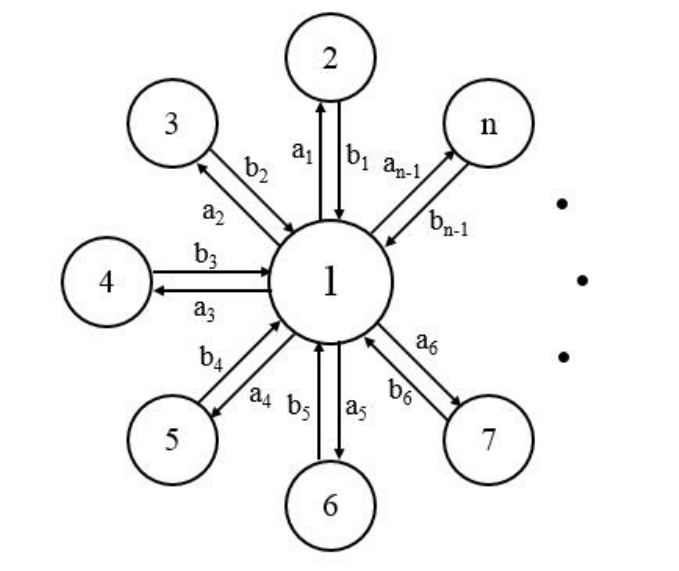}\label{fig1}}
\subfigure[]{
\begin{tikzpicture}
\begin{scope}[every node/.style={draw}, node distance= 2 cm]
    \node[white] (0) at (2,4.5) {};
    \node[circle] (1) at (2,3) {$1$};
    \node[circle] (2) at (4,0) {$2$};
    \node[circle] (3) at (0,0) {$3$};
\end{scope}
\begin{scope}[every node/.style={fill=white},
              every edge/.style={thick}]
    \draw[thick] [->](1) to [bend left=25] node[right=4] {{\footnotesize $a$}} (2);
    \draw[thick] [->](2) to [bend left=25] node[below=4] {{\footnotesize $b$}} (3);
    \draw[thick] [->](3) to [bend left=25] node[left=4] {{\footnotesize $c$}} (1);
    \draw[thick] [<-](1) to [bend right=15] node[left=5] {{\footnotesize $d$}} (2);
    \draw[thick] [<-](2) to [bend right=15] node[above=4] {{\footnotesize $e$}} (3);
    \draw[thick] [<-](3) to [bend right=15] node[right=5] {{\footnotesize $f$}} (1);
\end{scope}
\end{tikzpicture}
}\\
\subfigure[]{
 \begin{tikzpicture}
\begin{scope}[every node/.style={draw}, node distance= 1.5 cm]
    \node[circle] (1) at (0,0) {$1$};
    \node[circle] (2) at (2,0) {$2$};
    \node[circle] (3) at (4,0) {$3$};
    \node[circle] (4) at (6,0) {$4$};
    \node[circle] (5) at (8,0) {$5$};
    \node[circle] (6) at (10,0) {$6$};
    \node[circle] (7) at (4,2) {$7$};
    \node[circle] (8) at (4,4) {$8$};
     \node[circle] (9) at (8,2) {$9$};
\end{scope}
\begin{scope}[every node/.style={fill=white},
              every edge/.style={thick}]
    \draw[thick] [->](1) to [bend right] node[below=0.1] {{\footnotesize $r_1+s_1$}} (2);
    \draw[thick] [->](2) to [bend right] node[below=0.1] {{\footnotesize $r_2+s_2$}} (3);
    \draw[thick] [->](3) to [bend right] node[below=0.1] {{\footnotesize $r_3+s_4$}} (4);
    \draw[thick] [->](3) to [bend right] node[right=0.1] {{\footnotesize $r_3+s_3$}} (7);
    \draw[thick] [->](7) to [bend right] node[right=0.1] {{\footnotesize $r_7+s_7$}} (8);
    \draw[thick] [->](4) to [bend right] node[below=0.1] {{\footnotesize $r_4+s_4$}} (5);
    \draw[thick] [->](5) to [bend right] node[below=0.1] {{\footnotesize $r_5+s_5$}} (6);
    \draw[thick] [->](5) to [bend right] node[right=0.1] {{\footnotesize $r_5+s_5$}} (9);
    \draw[thick] [<-](1) to [bend left] node[above=0.1] {{\footnotesize $r_2$}} (2);
    \draw[thick] [<-](2) to [bend left] node[above=0.1] {{\footnotesize $r_3$}} (3);
    \draw[thick] [<-](3) to [bend left] node[above=0.1] {{\footnotesize $r_4$}} (4);
    \draw[thick] [<-](4) to [bend left] node[above=0.1] {{\footnotesize $r_5$}} (5);
    \draw[thick] [<-](5) to [bend left] node[above=0.1] {{\footnotesize $r_6$}} (6);
    \draw[thick] [->](7) to [bend right] node[left=0.1] {{\footnotesize $r_7$}} (3);
    \draw[thick] [->](8) to [bend right] node[left=0.1] {{\footnotesize $r_8$}} (7);
    \draw[thick] [->](9) to [bend right] node[left=0.1] {{\footnotesize $r_9$}} (5);
\end{scope}
\end{tikzpicture}
}
\caption{ { Three examples of strongly connected and cycle-balanced graphs. Each vertex in the figure represents a patch, and the weight of each directed edge is positive which corresponds to the degree of dispersal rate from the initial vertex to the terminal vertex. }(a) A star migration graph. (b) { A complete digraph with 3 edges}. It is cycle-balanced if $abc=def$. (c) A 
river network. }\label{fig2}
\end{figure}

The characterization in part 3 of Proposition \ref{tree} shows that a bi-directional tree is the cycle-balanced network with the minimal number of arcs. Examples of such bi-directional tree are the star graph in Figure~\ref{fig2}(a) and the river network in Figure~\ref{fig2}(c). Because of strong connectivity of $\mathcal{G}$, each row or column of $A$ has at least one non-zero entry. Also for these {networks}, the value of the weight $a_{ij}$ does not change the cycle-balanced property of the network, which is not the case for {networks} with longer cycles (like the one in Figure~\ref{fig2}(b) or complete graphs with more vertices as in part 4 of Proposition \ref{tree}).


\subsection{Dynamical systems}\label{section_dynamical}

{ In this section, we first state some results about the single species model, to which \eqref{main1} is reduced if $u=0$ or $v=0$. Then we present some results from monotone dynamics system theory. }

We recall results on a single species population model in a heterogeneous environment of $n$ patches ($n\ge 2$):
\begin{equation}\label{single}
w_i' = \mu \,\sum_{j=1}^n L_{ij}w_j+ w_i (r_i-w_i), \quad \quad i=1, \ldots, n,\; t>0,
\end{equation}
where $w_i$ denotes the population size (or density) in patch $i$. System \eqref{single} admits a trivial equilibrium $\mathbf{0}=(0,0,\ldots,0)$. { Suppose that $L$ is irreducible. Since the Jacobian matrix of the right hand side of \eqref{single} is cooperative (off-diagonal entries are nonnegative) and irreducible, the solutions of \eqref{single} have strongly monotonicity \cite[Theorem 4.1.1]{smith2008monotone}, i.e. for any two solutions $w^1(t)$ and $w^2(t)$ of \eqref{single}, if $w^1(0)>w^2(0)\ge \bm 0$ then $w^1(t)\gg w^2(t)$ for all $t>0$. In particular, if $w^1(0)>\bm 0$ then $w^1(t)\gg\bm 0$ for all $t>0$.} The global dynamics of \eqref{single} is as follows \cite{chen2019spectral,cosner1996variability,li2010global,Lu1993}, { which is essentially similar to the dynamics of the logistic model, i.e. the case $n=1$}: 
\begin{lemma}\label{lmm-single}
Suppose that the connection matrix $L$ as defined in \eqref{eq-connection} is irreducible, and $r_i>0$ for all $i=1, 2, ..., n$. Then the equilibrium $\mathbf{0}$ is unstable, and  \eqref{single} admits a unique positive equilibrium $w^*(\mu,r)=(w^*_1,\dots,w^*_n)$, which is globally asymptotically stable in $\mathbb{R}_+^n-\{\mathbf{0}\}$.
\end{lemma}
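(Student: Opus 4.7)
The plan is to split Lemma~\ref{lmm-single} into three parts: instability of $\mathbf{0}$, existence and uniqueness of a positive equilibrium $w^*$, and its global attraction in $\mathbb{R}_+^n\setminus\{\mathbf{0}\}$. Denote the right-hand side of \eqref{single} by $F$. Linearizing at $\mathbf{0}$ gives the Jacobian $J=\mu L+\mathrm{diag}(r_1,\dots,r_n)$. Since $L$ is irreducible and essentially nonnegative with $\mathbf{1}^{T}L=\mathbf{0}$ (vanishing column sums), Perron--Frobenius yields $\beta\gg\mathbf{0}$ with $L\beta=\mathbf{0}$. Then $J\beta=\mathrm{diag}(r)\beta\gg\mathbf{0}$, and the Collatz--Wielandt characterization gives $s(J)\ge\min_i r_i>0$, so $\mathbf{0}$ is unstable. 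Summing the equations and again using $\mathbf{1}^{T}L=\mathbf{0}$ yields $\tfrac{d}{dt}\sum_i w_i=\sum_i w_i(r_i-w_i)$, which bounds $\sum_i w_i$ uniformly above; hence the semiflow is dissipative on $\mathbb{R}_+^n$.

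Next, irreducibility of $L$ makes the Jacobian of $F$ quasi-positive and irreducible, so the semiflow is strongly monotone on $\mathbb{R}_+^n$. Using the Perron eigenvector $\beta$, set $\underline{w}=\epsilon\beta$ and $\overline{w}=M\beta$; for $\epsilon<\min_i r_i/\beta_i$ and $M>\max_i r_i/\beta_i$ one has $F(\underline{w})=\epsilon\beta\circ(r-\epsilon\beta)\gg\mathbf{0}$ and $F(\overline{w})\ll\mathbf{0}$. Monotonicity then drives the orbit from $\underline{w}$ up and the orbit from $\overline{w}$ down to equilibria in $[\underline{w},\overline{w}]$, producing at least one positive equilibrium $w^*$. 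For uniqueness, given positive equilibria $w^{(1)},w^{(2)}$, take $\lambda^*=\inf\{\lambda>0:\lambda w^{(1)}\ge w^{(2)}\}$. If $\lambda^*>1$, the sublinear identity $F(\lambda w)=\lambda F(w)+\lambda(1-\lambda)\,w\circ w$ combined with $F(w^{(1)})=F(w^{(2)})=\mathbf{0}$ and the equality $\lambda^*w^{(1)}_{i_0}=w^{(2)}_{i_0}$ in some coordinate $i_0$ yields
\[
0>\lambda^*(1-\lambda^*)(w^{(1)}_{i_0})^2=F_{i_0}(\lambda^*w^{(1)})-F_{i_0}(w^{(2)})=\mu\sum_{j\ne i_0}L_{i_0 j}(\lambda^*w^{(1)}_j-w^{(2)}_j)\ge 0,
\]
a contradiction; hence $\lambda^*\le 1$, and by symmetry $w^{(1)}=w^{(2)}$.

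Finally, for any $w_0>\mathbf{0}$, strong monotonicity and irreducibility force $w(t_0;w_0)\gg\mathbf{0}$ for some $t_0>0$, so $w(t_0;w_0)\in[\underline{w},\overline{w}]$ after rescaling $\epsilon$ and $M$; sandwiching between the orbits from $\underline{w}$ and $\overline{w}$, both of which converge to $w^*$ by uniqueness, gives $w(t;w_0)\to w^*$. I expect the main obstacle to be the uniqueness argument, which must combine the strict sublinearity of the local reaction with the irreducibility of the dispersal matrix to exploit the equality case coordinate by coordinate; once uniqueness and dissipativity are in place, global convergence follows directly from the monotone dynamical systems framework that the rest of the paper also relies on, and no appeal to the cycle-balanced hypothesis (A3) is needed.
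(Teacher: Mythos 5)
Your proposal is correct. Note, however, that the paper does not actually prove Lemma~\ref{lmm-single}: it is stated as a known result with citations to the literature (e.g.\ the logistic patch-model results of Lu--Takeuchi, Cosner, and the Lyapunov-function treatment in the Li--Shuai reference), so there is no in-paper argument to compare against line by line. Your route --- instability of $\mathbf{0}$ via the Perron right eigenvector $\beta$ of $L$ and Collatz--Wielandt, dissipativity from $\mathbf{1}^{T}L=\mathbf{0}$, existence by squeezing between the sub/supersolutions $\epsilon\beta$ and $M\beta$ in the strongly monotone (cooperative, irreducible) semiflow, uniqueness via the strict sublinearity identity $F(\lambda w)=\lambda F(w)+\lambda(1-\lambda)\,w\circ w$ combined with irreducibility at the touching coordinate, and global convergence by sandwiching --- is the standard self-contained monotone-systems argument and is sound at every step; in particular the cancellation of the reaction and diagonal terms at $i_0$ and the sign of the remaining off-diagonal sum are computed correctly. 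Compared with the graph-theoretic Lyapunov-function proofs in the cited sources (which use the Tree-Cycle identity machinery that the rest of this paper develops for the two-species problem), your argument is more elementary, needs only irreducibility of $L$, and, as you correctly observe, makes no use of the cycle-balanced hypothesis (A3) --- consistent with the lemma's hypotheses. The only caveat worth recording is that the statement implicitly requires $\mu>0$ (for $\mu=0$ the patches decouple and the equilibrium, while still unique among strictly positive ones, is not globally attracting on all of $\mathbb{R}_+^n\setminus\{\mathbf{0}\}$); your proof, like the lemma, tacitly assumes this when invoking irreducibility of the Jacobian to get strong monotonicity and strict positivity of orbits.
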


Throughout this paper, we assume that $p_i, q_i>0$ for all $i=1, 2, ..., n$.  By Lemma \ref{lmm-single}, \eqref{main1} has two semi-trivial equilibria, $E_1=(w^*(\mu_u,p), \mathbf{0})$ and $E_2=(\mathbf{0}, w^*(\mu_v,q))$, and one trivial equilibria $E_0=(\mathbf{0}, \mathbf{0})$. A positive equilibrium {(or coexistence equilibrium)} of \eqref{main1}, if exists, is denoted by $E=(u, v)$ when not causing confusion. 

An important tool to investigate the global dynamics of the Lotka-Volterra competition system \eqref{main1} is the  monotone dynamical system theory \cite{hess,hsu1996competitive,lam2016remark,smith2008monotone}. Let $X=\mathbb{R}_+^n\times\mathbb{R}_+^n$ equipped with an order $\le_K$ generated by the cone $K=\mathbb{R}_+^n\times\{-\mathbb{R}_+^n\}$. That is, for $x=(\bar u, \bar v), y=(\tilde u, \tilde v)\in X$, we say $x\le_K y$ if $\bar u\le \tilde u$ and $\bar v\ge \tilde v$; $x<_K y$ if $x\le_K y$ and $x\neq y$. { Then \eqref{main1} is cooperative with respect to $K$ \cite[Section 4.4]{smith2008monotone}, and the solutions of \eqref{main1} induce a strictly monotone dynamical system in $X$ in the sense that for any two initial data $(u_0^1, v_0^1)<_K(u_0^2, v_0^2)$, the corresponding solutions satisfy $(u^1(t), v^1(t))<_K(u^2(t), v^2(t))$ for all $t\ge 0$. Moreover since the Jacobian matrix of \eqref{main1} is irreducible in  $int(X)$, the dynamical system is strongly monotone in $int(X)$, i.e. for any two initial data $(u_0^1, v_0^1)<_K(u_0^2, v_0^2)$ with at least one of them in $int(X)$, the corresponding solutions satisfy $(u^1(t), v^1(t))\ll_K(u^2(t), v^2(t))$ for all $t> 0$ \cite[Theorem 4.1.1, Remark 4.1.1]{smith2008monotone}.

The following two results derived from the monotone dynamical system theory can be found in \cite{smith2008monotone}:
\begin{lemma}[{\cite[Theorem 4.4.2]{smith2008monotone}}]\label{M1}
Suppose that the connection matrix $L$ is irreducible and  $E_1$ is linearly unstable. Then one of the following holds:
\begin{itemize}
    \item[{\rm (i)}] $E_2$ attracts all solutions with initial data $(u_0,v_0)\in X$ satisfying $v_0>0$. In this case, $E_2$ is linearly stable or neutrally stable; 
    \item[{\rm (ii)}] There exists a positive equilibrium $E$ satisfying $E_2\ll_K E\ll_K E_1$ such that $E$ attracts all solutions with initial data $(u_0, v_0)\in X$ satisfying $E\le_K (u_0, v_0)<_K E_1$. 
\end{itemize}
\end{lemma}

\begin{lemma}[{\cite[Corollary 4.4.3]{smith2008monotone}}] \label{M2}
Suppose that the connection matrix $L$ is irreducible and  $E_1$ and $E_2$ are linearly unstable. Then there exist positive equilibria $E^1$ and $E^2$ of \eqref{main1} satisfying $E_2\ll_K E^2\le_K E^1\ll_K E_1$ such that the order interval
$$
[E^2, E^1]:=\{(u, v)\in X: \ E^2\le_K (u, v)\le_K E^1\}
$$
attracts all solutions with initial data $(u_0, v_0)$ satisfying $u_0>0$ and $v_0>0$. In particular, if $E^1=E^2$, then $E^1$ attracts all solutions as above.
\end{lemma}

The two Lemmas \ref{M1}-\ref{M2} are stated for the case $n=2$ in \cite{smith2008monotone}, but as remarked in  \cite[Page 70]{smith2008monotone}, they hold for any $n\ge 2$.  The following result can be derived by \cite[Theorem I.6.1]{hess}:
\begin{lemma}\label{M3}
Suppose that the connection matrix $L$ is irreducible and  $E^1$ and $E^2$ are  equilibria of \eqref{main1}  satisfying $E_2\le_K E^2<_K E^1\le_K E_1$. If $E^1$ and $E^2$ are stable and at least one of them is an isolated equilibrium, then there exists an unstable positive equilibrium $E\in [E^2, E^1]$. 
\end{lemma}

In view of previous lemmas, a key step as in \cite{he2016global, zhou2018global} is to show that  every positive equilibrium, if exists, is stable. }

\section{Stability of coexistence equilibrium}\label{section_main}


In this section we state our main results on the global dynamics of \eqref{main1}.
Recall that $w^*(\mu, r)$ is the unique positive steady state of \eqref{single}. Then the following result states that every positive equilibrium of \eqref{main1}, if exists, is locally asymptotically stable except for the degenerate case, when the competition is weak and the network $\mathcal{G}$ is cycle-balanced. 

\begin{theorem}\label{theorem_stable}
Suppose that (A1)-(A3) hold. A positive equilibrium $E=(u, v)$ of \eqref{main1}, if exists, is locally asymptotically stable except for the case $bc=1$ and $w^*(\mu_u, p)=cw^*(\mu_v, q)$ at which case $E$ is linearly neutrally stable.
\end{theorem}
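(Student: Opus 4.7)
The plan is to reduce Theorem~\ref{theorem_stable} to showing that the principal eigenvalue $\lambda^{*}$ of the linearization of \eqref{main1} at $E=(u,v)$ is nonpositive, with equality only in the stated degenerate case. Writing $K=\mathbb{R}^n_+\times(-\mathbb{R}^n_+)$ for the competitive cone used in Section~2, the substitution $\psi\mapsto -\tilde\psi$ conjugates the Jacobian $J$ at $E$ into the Metzler matrix
\[
\hat J=\begin{pmatrix}\mu_u L+\mathrm{diag}(p_i-2u_i-cv_i) & c\,\mathrm{diag}(u_i)\\[2pt] b\,\mathrm{diag}(v_i) & \mu_v L+\mathrm{diag}(q_i-bu_i-2v_i)\end{pmatrix},
\]
which is irreducible by (A2). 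The Perron--Frobenius theorem supplies a real simple principal eigenvalue $\lambda^{*}=s(\hat J)$ with a positive eigenvector $(\phi,\tilde\psi)\gg\mathbf 0$; since $\hat J$ is similar to $J$, linear (asymptotic) stability of $E$ is equivalent to $\lambda^{*}<0$, and linear neutral stability to $\lambda^{*}=0$.

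Introducing the positive ratios $\eta_i=\phi_i/u_i$, $\xi_i=\tilde\psi_i/v_i$, I would substitute $\phi_i=u_i\eta_i$, $\tilde\psi_i=v_i\xi_i$ into the eigenvector equations and use the equilibrium identities $\mu_u(Lu)_i+u_i(p_i-u_i-cv_i)=0$ and $\mu_v(Lv)_i+v_i(q_i-bu_i-v_i)=0$ to cancel the $(p_i-u_i-cv_i)$ and $(q_i-bu_i-v_i)$ terms. The eigenvector equations become
\[
\mu_u\sum_{j} L_{ij}u_j(\eta_j-\eta_i)=(u_i+\lambda^{*})u_i\eta_i-cu_iv_i\xi_i,\qquad \mu_v\sum_{j} L_{ij}v_j(\xi_j-\xi_i)=(v_i+\lambda^{*})v_i\xi_i-bu_iv_i\eta_i.
\]

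The next step activates the cycle-balanced hypothesis (A3). Multiplying the first equation by $\alpha_iu_i\eta_i=\alpha_i\phi_i$ and the second by $\alpha_iv_i\xi_i=\alpha_i\tilde\psi_i$, where $\alpha_i$ is the cofactor of the $i$-th diagonal entry of $\mathcal L$, and summing over $i$, the left-hand sides become $\sum_{i,j}\alpha_iL_{ij}u_iu_j\eta_i(\eta_j-\eta_i)$ and its $\xi$-analogue. For $F_{ij}(\eta_i,\eta_j)=u_iu_j\eta_i(\eta_j-\eta_i)$ one has $F_{ij}+F_{ji}=-u_iu_j(\eta_i-\eta_j)^2\le 0$, so applying Theorem~\ref{thm-sy} to $-F_{ij}$ gives that both double sums are $\le 0$, with equality only if $\eta$ (respectively, $\xi$) is constant. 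Substituting back $\phi_i=u_i\eta_i$ and $\tilde\psi_i=v_i\xi_i$ produces the pair of scalar inequalities
\[
\lambda^{*}\sum_i\alpha_i\phi_i^{2}+\sum_i\alpha_iu_i\phi_i^{2}\le c\sum_i\alpha_iu_i\phi_i\tilde\psi_i,\qquad \lambda^{*}\sum_i\alpha_i\tilde\psi_i^{2}+\sum_i\alpha_iv_i\tilde\psi_i^{2}\le b\sum_i\alpha_iv_i\phi_i\tilde\psi_i.
\]

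The main technical obstacle is the last step: combining these two scalar inequalities under the weak-competition constraint $bc\le 1$ to conclude $\lambda^{*}\le 0$. My plan is to apply Cauchy--Schwarz to each cross sum, e.g.\ $\sum_i\alpha_iu_i\phi_i\tilde\psi_i\le(\sum_i\alpha_iu_i\phi_i^{2})^{1/2}(\sum_i\alpha_iu_i\tilde\psi_i^{2})^{1/2}$, and similarly for the $v$-weighted sum, and then combine the two resulting inequalities via an AM--GM/rearrangement step on the four weighted $L^2$-norms, using $bc\le 1$, to force a contradiction whenever $\lambda^{*}>0$. Tracing the equality cases in Theorem~\ref{thm-sy}, Cauchy--Schwarz and AM--GM then forces $\eta$ and $\xi$ to be constants together with $bc=1$ and $u=cv$; combined with the equilibrium equations, $u=cv$ is equivalent to $w^{*}(\mu_u,p)=cw^{*}(\mu_v,q)$, which is precisely the stated degenerate case, and in that case $\lambda^{*}=0$ so that $E$ is linearly neutrally stable.
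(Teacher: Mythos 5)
Your reduction to the Perron--Frobenius eigenvalue of the cone-conjugated Metzler matrix, the ratio substitution $\eta_i=\phi_i/u_i$, $\xi_i=\tilde\psi_i/v_i$, and the appeal to Theorem~\ref{thm-sy} via $F_{ij}+F_{ji}=-u_iu_j(\eta_i-\eta_j)^2$ all match the paper's strategy. The genuine gap is precisely the step you flag as the ``main technical obstacle'': with the multiplier $\alpha_i\phi_i=\alpha_iu_i\eta_i$ your two scalar inequalities cannot be closed under $bc\le1$ by Cauchy--Schwarz and AM--GM. Writing $P=\sum_i\alpha_iu_i\phi_i^2$, $Q=\sum_i\alpha_iu_i\tilde\psi_i^2$, $R=\sum_i\alpha_iv_i\phi_i^2$, $S=\sum_i\alpha_iv_i\tilde\psi_i^2$, Cauchy--Schwarz applied to the cross sums gives, if $\lambda^*>0$, only $P<c^2Q$ and $S<b^2R$, hence $PS<QR$; but $PS$ and $QR$ are not comparable in general (take $u$ large where $\phi$ is large and $v$ large where $\tilde\psi$ is large, or the reverse configuration), so no contradiction results. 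A pointwise quadratic-form argument fares no better: $-u_i\phi_i^2-tv_i\tilde\psi_i^2+(cu_i+tbv_i)\phi_i\tilde\psi_i\le0$ requires $(cu_i+tbv_i)^2\le 4tu_iv_i$ for one $t$ and all $i$, which under $bc\le1$ essentially forces $u_i/v_i$ to be constant. The root cause is that your cross terms carry the mismatched weights $u_i$ and $v_i$.

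The paper circumvents this with a cubic multiplier: it multiplies \eqref{diff} by $\alpha_i\phi_i^2/u_i^2$ (i.e.\ by $\alpha_i\eta_i^2$ rather than your $\alpha_iu_i\eta_i$), and the $v$-equation by $\alpha_i\psi_i^2/v_i^2$. The left-hand sides are still controlled by Theorem~\ref{thm-sy}, now with $F_{ij}+F_{ji}=-u_iu_j(\eta_i-\eta_j)^2(\eta_i+\eta_j)\le0$, but the right-hand sides become, in your sign convention, $\sum_i\alpha_i\phi_i^2(\phi_i-c\tilde\psi_i)+\lambda^*\sum_i\alpha_i\phi_i^3/u_i\le 0$ and $\sum_i\alpha_i\tilde\psi_i^2(\tilde\psi_i-b\phi_i)+\lambda^*\sum_i\alpha_i\tilde\psi_i^3/v_i\le 0$: the weights $u_i,v_i$ survive only in the harmless $\lambda^*$ terms. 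Scaling the second identity by $c^3$, adding, and using $bc\le1$ with $\phi_i>0$ yields the pointwise factorization
\[
\phi_i^2(\phi_i-c\tilde\psi_i)+c^2\tilde\psi_i^2\bigl(c\tilde\psi_i-bc\phi_i\bigr)\ \ge\ \bigl(\phi_i^2-c^2\tilde\psi_i^2\bigr)\bigl(\phi_i-c\tilde\psi_i\bigr)\ =\ \bigl(\phi_i+c\tilde\psi_i\bigr)\bigl(\phi_i-c\tilde\psi_i\bigr)^2\ \ge\ 0,
\]
whence $\lambda^*\sum_i\alpha_i\bigl(\phi_i^3/u_i+c^3\tilde\psi_i^3/v_i\bigr)\le0$ and so $\lambda^*\le0$, with equality forcing $bc=1$, $\phi=c\tilde\psi$ and constant ratios; this gives $u=cv$ and then $w^*(\mu_u,p)=cw^*(\mu_v,q)$ exactly as you anticipated. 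Replacing your multiplier by the cubic one, the rest of your outline goes through.
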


\begin{proof}
Let $E=(u, v)$ be a positive equilibrium of \eqref{main1}. Then it satisfies the following equations:
\begin{equation}\label{ss}
\begin{cases}
\mu_u\ds\sum_{j=1}^{n}L_{ij}u_j+u_i(p_i-u_i-cv_i)=0, &i=1,\dots,n,\\
\mu_v\ds\sum_{j=1}^{n}L_{ij}v_j+v_i(q_i-bu_i-v_i)=0,&i=1,\dots,n.
\end{cases}
\end{equation}
Linearizing \eqref{main1} at $E$, we have the following eigenvalue problem
\begin{equation}\label{eig}
\begin{cases}
\mu_u\ds\sum_{j=1}^{n}L_{ij}\phi_j+(p_i-u_i-cv_i)\phi_i-u_i(\phi_i+c\psi_i)+\lambda\phi_i=0, &i=1,\dots,n,\\
\mu_v\ds\sum_{j=1}^{n}L_{ij}\psi_j+(q_i-bu_i-v_i)\psi_i-v_i(b\phi_i+\psi_i)+\lambda\psi_i=0,&i=1,\dots,n.
\end{cases}
\end{equation}
Here $(\phi, \psi)=((\phi_1, ..., \phi_n), (\psi_1, ..., \psi_n))$ is a principal eigenvector associated with the principal eigenvalue $\lambda$ of \eqref{eig}. We  normalize  $(\phi, \psi)$ such that $\phi_i>0$ and $\psi_i<0$ for all $i=1, ..., n$.  We will show that $\lambda\ge 0$, where the equality holds if and only if $bc=1$ and $w^*(\mu_u, p)=cw^*(\mu_v, q)$.

Multiplying the first equation of \eqref{eig} by $u_i$, the first equation of \eqref{ss} by $\phi_i$ and taking the difference, we have
\begin{equation}\label{diff}
    \mu_u \ds\sum_{j=1, j\neq i}^{n}L_{ij} (\phi_ju_i-\phi_i u_j)=u_i^2(\phi_i+c\psi_i)-\lambda u_i\phi_i, \ \ \ \ i=1,\dots,n.
\end{equation}
Let $(\alpha_1, ..., \alpha_n)$ be the left eigenvector of $\mathcal{L}$ as defined in \eqref{eq-ev}. Multiplying both sides of \eqref{diff} by $\alpha_i\phi_i^2/u_i^2$ and summing up all the equations, we obtain
\begin{equation}\label{diff1}
    \mu_u \ds\sum_{i=1}^{n} \ds\sum_{j=1, j\neq i}^{n}\alpha_i L_{ij} \left(\frac{\phi_i^2\phi_j}{u_i}-\frac{\phi_i^3u_j}{u_i^2}\right)=\ds\sum_{i=1}^{n}\alpha_i \phi_i^2(\phi_i+c\psi_i)-\lambda\ds\sum_{i=1}^{n}\alpha_i \frac{\phi_i^3}{u_i}.
\end{equation}

Let $\{F_{ij}:D \times D \to \mathbb{R}\}_{1\le i, j\le n}$ be a family of functions defined as
$\ds F_{ij}(x_i,x_j)=\frac{\phi_i^2\phi_j}{u_i}-\frac{\phi_i^3u_j}{u_i^2}$, where $D=(0,\infty)\times (0, \infty)$ and  $x_i=(\phi_i,u_i) \in D$. 
It can be verified that 
\begin{align*}
F_{ij}(x_i, x_j)+F_{ji}(x_j, x_i) =
& \, \left(\frac{\phi_i^2\phi_j}{u_i}-\frac{\phi_i^3u_j}{u_i^2}\right)
+ \left(\frac{\phi_i\phi_j^2}{u_j}-\frac{\phi_j^3u_i}{u_j^2}\right)\\
=&\,\, u_iu_j\frac{\phi_i^2}{u_i^2}\left(\frac{\phi_j}{u_j}-\frac{\phi_i}{u_i}\right)+u_iu_j\frac{\phi_j^2}{u_j^2}\left(\frac{\phi_i}{u_i}-\frac{\phi_j}{u_j}\right)\\
=&\, -u_iu_j\left(\frac{\phi_i}{u_i}-\frac{\phi_j}{u_j}\right)^2\left(\frac{\phi_i}{u_i}+\frac{\phi_j}{u_j}\right)\le 0,
\end{align*}
and the equal sign holds if and only if $\phi_i/u_i=\phi_j/u_j$.
Since $\mathcal{G}$ is cycle-balanced, it follows from Theorem~\ref{thm-sy} that 
\begin{equation}\label{claim}
\ds\sum_{i=1}^{n} \ds\sum_{j=1, j\neq i}^{n}\alpha_i L_{ij} F_{ij}(x_i,x_j)  = \ds\sum_{i=1}^{n} \ds\sum_{j=1, j\neq i}^{n}\alpha_i L_{ij} \left(\frac{\phi_i^2\phi_j}{u_i}-\frac{\phi_i^3u_j}{u_i^2}\right) \le 0,
\end{equation}
and the equal sign holds if and only if 
\begin{equation}\label{equ}
    \frac{\phi_1}{u_1}=\frac{\phi_2}{u_2}=\cdots=\frac{\phi_n}{u_n}.
\end{equation}

Similarly, by using \eqref{ss} and \eqref{eig}, we obtain
\begin{equation}\label{diff2}
    \mu_v \ds\sum_{i=1}^{n} \ds\sum_{j=1, j\neq i}^{n}\alpha_i L_{ij} \left(\frac{\psi_i^2\psi_j}{v_i}-\frac{\psi_i^3v_j}{v_i^2}\right)=\ds\sum_{i=1}^{n}\alpha_i \psi_i^2(b\phi_i+\psi_i)-\lambda\ds\sum_{i=1}^{n}\alpha_i \frac{\psi_i^3}{v_i}.
\end{equation}
Similar to \eqref{claim}, it follows from Theorem~\ref{thm-sy} that 
\begin{equation}\label{claim2}
 \ds\sum_{i=1}^{n}\ds\sum_{j=1, j\neq i}^{n}\alpha_i L_{ij} \left(\frac{\psi_i^2\psi_j}{v_i}-\frac{\psi_i^3v_j}{v_i^2}\right)\ge 0,
\end{equation}
where the equal sign holds if and only if
\begin{equation}\label{equ2}
    \frac{\psi_1}{v_1}=\frac{\psi_2}{v_2}=\cdots=\frac{\psi_n}{v_n}.
\end{equation}

Multiplying \eqref{diff2} by $c^3$ and subtracting it from \eqref{diff1}, and noticing \eqref{claim} and \eqref{claim2}, we have
\begin{eqnarray*}
-\lambda  \ds\sum_{i=1}^{n}\alpha_i \left(\frac{\phi_i^3}{u_i}- c^3\frac{\psi_i^3}{v_i}\right)&\le& -\ds\sum_{i=1}^{n}\alpha_i \phi_i^2(\phi_i+c\psi_i)+\ds\sum_{i=1}^{n}\alpha_i c^2 \psi_i^2(bc\phi_i+c\psi_i)\\
&\le& -\ds\sum_{i=1}^{n}\alpha_i \phi_i^2(\phi_i+c\psi_i)+\ds\sum_{i=1}^{n}\alpha_i c^2 \psi_i^2(\phi_i+c\psi_i)\\
&=&-\ds\sum_{i=1}^{n}\alpha_i (\phi_i-c\psi_i)(\phi_i+c\psi_i)^2
\le0,
\end{eqnarray*}
where we have used $bc\le 1$. This implies $\lambda\ge 0$ and the equality holds if and only if $bc=1$, $\phi_i=c\psi_i$ for all $i$, and \eqref{equ} and \eqref{equ2} hold. 

Now we consider the situation when $\lambda=0$. It follows that $u=kv$ for some $k>0$, and $bc=1$. By $u=kv$ and the first equation of \eqref{ss}, we have
$$w^*(\mu_u, p)=\left(1+\frac{c}{k}\right)u$$
By the second equation of \eqref{ss}, we have
$$
w^*(\mu_v, q)=\left(kb+1\right)v.
$$
Therefore,
$$
\frac{w^*(\mu_u, p)}{w^*(\mu_v,q)}=\frac{\left(1+\frac{c}{k}\right)u}{\left(kb+1\right)v}=c.
$$
\end{proof}

Denote by $\lambda_1(\mu,h)$ the principal eigenvalue of
\begin{equation*}
\mu\sum_{j=1}^nL_{ij}\psi_j+h_i\psi_i+\lambda\psi_i=0,\;\;i=1,\dots,n,
\end{equation*}
where $h=(h_1,\dots,h_n)$. Then $\lambda_1(\mu,h)=-s(\mu L+diag(h_i))$. Let $E_1=(w^*(\mu_u, p),\mathbf{0})$ and $E_2=(\mathbf{0},w^*(\mu_v,q))$ be the two semi-trivial equilibria of \eqref{main}. 
Following \cite{he2016global} we define the following parameter subsets of $Q=\{(\mu_u,\mu_v):\mu_u,\mu_v>0\}$.
\begin{equation*}
\begin{split}
&S_u=\{(\mu_u,\mu_v):(w^*(\mu_u, p),\mathbf{0})\text{ is linearly stable}\}=\{(\mu_u,\mu_v):\lambda_1(\mu_v,q-b w^*(\mu_u, p))>0\},\\
&S_v=\{(\mu_u,\mu_v):(\mathbf{0},w^*(\mu_v,q)) \text{ is linearly stable}\}=\{(\mu_u,\mu_v):\lambda_1(\mu_u,p-cw^*(\mu_v,q))>0\},\\
&S_{-}=\{(\mu_u,\mu_v):\lambda_1(\mu_v,q-b w^*(\mu_u, p))<0,\; \lambda_1(\mu_u,p-cw^*(\mu_v,q))<0\},\\
&S_{u,0}=\{(\mu_u,\mu_v):\lambda_1(\mu_v,q-bw^*(\mu_u,p))=0\},\\
&S_{v,0}=\{(\mu_u,\mu_v):\lambda_1(\mu_u,p-cw^*(\mu_v,q))=0\},\\
&S_{0,0}=\{(\mu_u,\mu_v):\lambda_1(\mu_v,q-bw^*(\mu_u,p))=\lambda_1(\mu_u,p-cw^*(\mu_v,q))=0\}.
\end{split}
\end{equation*}

We classify the global dynamics of \eqref{main1} according to the diffusion coefficients as follows.
\begin{theorem}\label{theorem-class}
Suppose that (A1)-(A3) hold. Then we have the following mutually disjoint decomposition of $Q$:
\begin{equation}\label{Qc}
Q=\left(S_u\cup S_{u,0}\setminus S_{0,0}\right)\bigcup\left(S_v\cup S_{v,0}\setminus S_{0,0}\right)\bigcup S_{-}\bigcup S_{0,0}.
\end{equation}
Moreover, the following statements hold for system \eqref{main1}:
\begin{enumerate}
\item [{\rm (i)}] For any $(\mu_u,\mu_v)\in S_u\cup S_{u,0}\setminus S_{0,0}$, $E_1=(w^*(\mu_u, p),\mathbf{0})$ is globally asymptotically stable.
\item [{\rm (ii)}] For any $(\mu_u,\mu_v)\in S_v\cup S_{v,0}\setminus S_{0,0}$, $E_2=(\mathbf{0},w^*(\mu_v,q))$ is globally asymptotically stable.
\item [{\rm (iii)}] For any $(\mu_u,\mu_v)\in S_{-}$,  \eqref{main1} has a unique positive equilibrium $(u,v)$, which is globally asymptotically stable.
\item [{\rm (iv)}] For any $(\mu_u,\mu_v)\in S_{0,0}$, we have $bc=1$, $w^*(\mu_u, p)\equiv cw^*(\mu_v,q)$ and  \eqref{main1} has a compact global attractor consisting of a continuum of equilibria 
\begin{equation}\label{co}
\{\left(\rho w^*(\mu_u, p),(1-\rho) w^*(\mu_u, p)/c\right):\rho\in [0,1]\}.
\end{equation}
\end{enumerate}
\end{theorem}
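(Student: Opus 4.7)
The plan is to combine the stability result of Theorem \ref{theorem_stable} with the monotone dynamical systems trichotomy recalled at the end of Section 2, in the spirit of \cite{he2016global,zhou2018global}. Throughout I write $\lambda_1^u:=\lambda_1(\mu_v,\,q-bw^*(\mu_u,p))$ and $\lambda_1^v:=\lambda_1(\mu_u,\,p-cw^*(\mu_v,q))$, so that $S_u=\{\lambda_1^u>0\}$ and $S_v=\{\lambda_1^v>0\}$; the sign pattern of $(\lambda_1^u,\lambda_1^v)$ then determines both the region of $Q$ and the applicable alternative of the trichotomy.

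Establishing the partition \eqref{Qc} amounts to showing $\{\lambda_1^u\ge 0,\ \lambda_1^v\ge 0\}\subset S_{0,0}$. If both were strictly positive, both semitrivial equilibria would be strictly linearly stable, and alternative~3 of the trichotomy would produce a positive equilibrium that is not asymptotically stable, contradicting Theorem \ref{theorem_stable} which grants linear asymptotic stability to every positive equilibrium off the degenerate locus $\{bc=1,\ w^*(\mu_u,p)=cw^*(\mu_v,q)\}$. The boundary case $\lambda_1^u>0$, $\lambda_1^v=0$ is ruled out similarly: $\lambda_1^v=0$ makes $E_2$ a bifurcation point, nearby positive equilibria exist and are linearly asymptotically stable by Theorem \ref{theorem_stable}, and since $E_1$ is strictly stable the monotone trichotomy applied on the order interval between $E_1$ and such a positive equilibrium forces an intermediate non-stable positive equilibrium, again forbidden by Theorem \ref{theorem_stable}. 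The identification of $S_{0,0}$ with the degenerate locus of Theorem \ref{theorem_stable} then follows from a Perron--Frobenius comparison of $s(\mu_u L+\mathrm{diag}(p-w^*(\mu_u,p)))$ with $s(\mu_u L+\mathrm{diag}(p-cw^*(\mu_v,q)))$, using that both spectral bounds equal zero.

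The three nondegenerate regimes follow from the trichotomy. In $S_-$ both semitrivial equilibria are unstable, alternative~2 yields a stable positive equilibrium, Theorem \ref{theorem_stable} makes every positive equilibrium linearly asymptotically stable (since $S_-\cap S_{0,0}=\emptyset$), and two such could not coexist, because the order interval they bracket would contain a non-stable positive equilibrium by the trichotomy; hence the positive equilibrium is unique and globally asymptotically stable, giving (iii). For (i) on $S_u\setminus S_{0,0}$, the same obstruction rules out any positive equilibrium, and with $E_2$ unstable alternative~1 yields $E_1$ globally asymptotically stable; (ii) is symmetric.

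The two anticipated obstacles are the neutrally stable strata $(S_{u,0}\cup S_{v,0})\setminus S_{0,0}$ and the attractor description in (iv). For the neutrally stable strata, say $\lambda_1^u=0$ and $\lambda_1^v<0$, the strict-stability argument for $E_1$ is unavailable, so I would use a perturbation argument: decrease one component of $q$ by $\varepsilon>0$ to push $\lambda_1^u$ strictly positive (by strict monotonicity of the spectral bound in the diagonal) while keeping $\lambda_1^v<0$, apply (i) to the perturbed system to get $E_1^\varepsilon$ globally attracting, and send $\varepsilon\to 0^+$ using upper semicontinuity of the global attractor together with continuous dependence of $E_1^\varepsilon$ on $\varepsilon$. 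For (iv), direct substitution using $bc=1$ and $w^*(\mu_u,p)=cw^*(\mu_v,q)$ verifies that \eqref{co} consists of equilibria. To show it is the compact global attractor I would construct a Lyapunov function built from the left eigenvector $(\alpha_i)$ of $\mathcal{L}$ for eigenvalue $0$ and the Tree--Cycle identity (Theorem \ref{thm-sy}), whose dissipation combined with LaSalle's invariance principle confines $\omega$-limits to equilibria lying on the invariant hyperplane $\{u+cv=w^*(\mu_u,p)\}$, which is precisely the continuum \eqref{co}.
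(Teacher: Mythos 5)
Your outline gets the nondegenerate regimes right, but it is missing the one computation the paper's proof actually turns on, and two of your substitutes for it do not work. The paper establishes both the disjointness in \eqref{Qc} \emph{and} the characterization of $S_{0,0}$ from a single inequality, \eqref{twoequality}, obtained by applying the Tree--Cycle estimate (Theorem \ref{thm-sy}) to the eigenvalue problems at the \emph{semitrivial} equilibria: it gives $\lambda_1(\mu_u,p-cv^*)\sum_i\alpha_i(u_i^*)^2+c^3\lambda_1(\mu_v,q-bu^*)\sum_i\alpha_i(v_i^*)^2\le -\sum_i\alpha_i(cv_i^*-u_i^*)^2(cv_i^*+u_i^*)\le 0$, with equality iff $u^*=cv^*$ and $bc=1$. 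This kills all of $\{\lambda_1^u\ge 0,\ \lambda_1^v\ge 0\}\setminus S_{0,0}$ at once, including the mixed boundary case $\lambda_1^u>0,\ \lambda_1^v=0$, and simultaneously proves $S_{0,0}\subset\{u^*=cv^*,\ bc=1\}$, which is the substance of (iv). Your replacements fail at exactly these two points. First, at fixed $(\mu_u,\mu_v)$ the condition $\lambda_1^v=0$ does \emph{not} make $E_2$ ``a bifurcation point'' with nearby positive equilibria: a degenerate linearization is necessary but not sufficient for branching, and no parameter is being varied, so the unstable intermediate equilibrium you want never materializes. Second, the Perron--Frobenius comparison of $s(\mu_uL+\mathrm{diag}(p-u^*))$ with $s(\mu_uL+\mathrm{diag}(p-cv^*))$, both equal to zero, only tells you that $p-u^*$ and $p-cv^*$ are not componentwise ordered (unless equal); it does not force $u^*=cv^*$, and it says nothing at all about $bc=1$.

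The other genuine gap is your treatment of the strata $S_{u,0}\setminus S_{0,0}$. Upper semicontinuity of global attractors goes the wrong way: it bounds $\mathcal{A}_\varepsilon$ inside a neighborhood of $\mathcal{A}_0$, not conversely, so knowing $\mathcal{A}_\varepsilon=\{E_1^\varepsilon\}$ for all $\varepsilon>0$ is perfectly consistent with a positive equilibrium (and a large attractor) appearing at $\varepsilon=0$. The paper's Step 2 shows what the correct perturbation argument looks like: a hypothetical coexistence equilibrium at a point of $S_{u,0}\setminus S_{0,0}$ is linearly asymptotically stable by Theorem \ref{theorem_stable}, hence its Jacobian is invertible and the implicit function theorem continues it to nearby parameters $(\check b,\check c)$ chosen so that $(\check b,\check c)\in S_u$, where $E_1$ is globally asymptotically stable --- a contradiction. (Your perturbation in $q$ could be repaired the same way, by continuing the equilibrium rather than passing attractors to the limit.) Finally, for (iv) the hyperplane $\{u+cv=w^*(\mu_u,p)\}$ is not invariant unless $\mu_u=\mu_v$ and the reaction terms coincide, so the LaSalle argument as sketched does not close; the paper instead verifies \eqref{co} directly and invokes \cite[Theorem 3]{hirsch2006asymptotically} for convergence to the continuum of equilibria.
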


\begin{proof}
\underline{\emph{Step 1}} We first show the mutually disjoint decomposition of $Q$.  For simplicity of notations, we denote $u^*=w^*(\mu_u, p)$ and $v^*=w^*(\mu_v,q)$.
Denote by $\psi=(\psi_1,\dots,\psi_n)>\mathbf 0$ (respectively, $\phi=(\phi_1,\dots,\phi_n)>\mathbf 0$) the principal eigenvector with respect to $\la_1(\mu_v,q-bu^*)$ (respectively, $\la_1(\mu_u,p-cv^*)$).
Then we have
\begin{equation}\label{semistable}
\begin{cases}
\mu_u\ds\sum_{j=1}^{n}L_{ij}\phi_j+(p_i-cv^*_i)\phi_i+\lambda_1(\mu_u,p-cv^*)\phi_i=0, &i=1,\dots,n,\\
\mu_v\ds\sum_{j=1}^{n}L_{ij}\psi_j+(q_i-bu^*_i)\psi_i+\lambda_1(\mu_v,q-bu^*)\psi_i=0,&i=1,\dots,n.
\end{cases}
\end{equation}
Note that $u^*=(u^*_1,\cdots,u^*_n)$ and $v^*=(v^*_1,\cdots,v^*_n)$ satisfy 
\begin{equation}\label{semisteady}
\begin{cases}
\mu_u\ds\sum_{j=1}^{n}L_{ij}u^*_j+u_i(p_i-u^*_i)=0, &i=1,\dots,n,\\
\mu_v\ds\sum_{j=1}^{n}L_{ij}v^*_j+v_i(q_i-v^*_i)=0,&i=1,\dots,n.
\end{cases}
\end{equation}
Multiplying the first equation of \eqref{semistable} by
$u^*_i$, the first equation of \eqref{semisteady} by $\phi_i$ and taking the difference, we have
\begin{equation}
\label{diffsemi}
\mu_u \ds\sum_{j=1, j\neq i}^{n}L_{ij} (\phi_ju^*_i-\phi_i u^*_j)=(cv^*_i-u^*_i)u_i^*\phi_i-
\lambda_1(\mu_u,p-c v^*)\phi_i u^*_i, \;\; i=1,\dots,n
\end{equation}
Let $(\alpha_1, ..., \alpha_n)$ be the left eigenvector of $\mathcal{L}$ as defined in \eqref{eq-ev}. Multiplying both sides of \eqref{diffsemi} by $\alpha_i u_i^*/\phi_i$, and taking the sum, we obtain
\begin{equation}\label{diffsemi2}
\begin{split}
   &\mu_u \ds\sum_{i=1}^{n}\ \ds\sum_{j=1, j\neq i}^{n}\alpha_i L_{ij} \left(\frac{\phi_j(u^*_i)^2}{\phi_i}-u_i^*u^*_j\right)\\
   =&\ds\sum_{i=1}^{n}\alpha_i (cv^*_i-u^*_i)(u_i^*)^2-\lambda_1(\mu_u,p-c v^*)\ds\sum_{i=1}^{n}\alpha_i(u_i^*)^2.
\end{split}\end{equation}
Let $F_{ij}=F_{ij}(\phi_i,\phi_j) = \ds\frac{\phi_j(u^*_i)^2}{\phi_i}-u_i^*u^*_j$. Since \begin{eqnarray*}
F_{ij}+F_{ji}= \frac{\phi_j(u^*_i)^2}{\phi_i}-u_i^*u^*_j +
\frac{\phi_i(u^*_j)^2}{\phi_j}-u_j^*u^*_i
=\left(\frac{\sqrt{\phi_j}}{\sqrt{\phi_i}}u_i^*-\frac{\sqrt{\phi_i}}{\sqrt{\phi_j}}u_j^*\right)^2\ge0,
\end{eqnarray*}
and the equality holds if and only if $\phi_i/u^*_i=\phi_j/u^*_j$, it follows from Theorem~\ref{thm-sy} that \begin{equation}\label{semiclaim}
 \ds\sum_{i=1}^{n} \ds\sum_{j=1, j\neq i}^{n}\alpha_i L_{ij} \left(\frac{\phi_j(u^*_i)^2}{\phi_i}-u_i^*u^*_j\right)\ge0,
\end{equation}
and the equal sign holds if and only if
\begin{equation}\label{equsemi1}
    \frac{\phi_1}{u^*_1}=\frac{\phi_2}{u^*_2}=\cdots=\frac{\phi_n}{u^*_n}.
\end{equation}
Similarly, by using \eqref{semistable} and \eqref{semisteady}, we have
\begin{equation}\label{diffsemi2v}
\begin{split}
    &\mu_v \ds\sum_{i=1}^{n} \ds\sum_{j=1,j\neq i}^{n}\alpha_i L_{ij} \left(\frac{\psi_j(v^*_i)^2}{\psi_i}-v_i^*v^*_j\right)\\
    =&\ds\sum_{i=1}^{n}\alpha_i (bu^*_i-v^*_i)(v_i^*)^2-\lambda_1(\mu_v,q-b u^*)\ds\sum_{i=1}^{n}\alpha_i(v_i^*)^2,
\end{split}\end{equation}
and
\begin{equation}\label{semiclaimv}
 \ds\sum_{i=1}^{n} \ds\sum_{j=1, j\neq i}^{n}\alpha_i L_{ij} \left(\frac{\psi_j(v^*_i)^2}{v_i}-v_i^*v^*_j\right)\ge0,
\end{equation}
where the equal sign holds if and only if
\begin{equation}\label{equsemi}
    \frac{\psi_1}{v^*_1}=\frac{\psi_2}{v^*_2}=\cdots=\frac{\psi_n}{v^*_n}.
\end{equation}

Multiplying \eqref{diffsemi2v} by $c^3$ and subtracting it from \eqref{diffsemi2}, and noticing \eqref{semiclaim} and  \eqref{diffsemi2}, we have
\begin{equation}\label{twoequality}
\begin{split}
&\lambda_1(\mu_u,p-c v^*)\ds\sum_{i=1}^{n}\alpha_i(u_i^*)^2+\lambda_1(\mu_v,q-b u^*)c^3\ds\sum_{i=1}^{n}\alpha_i(v_i^*)^2\\
\le&\ds\sum_{i=1}^{n}\alpha_i (cv^*_i-u^*_i)(u_i^*)^2+\ds\sum_{i=1}^{n}\alpha_i (bcu^*_i-cv^*_i)(cv_i^*)^2\\
\le&-\sum_{i=1}^{n}\alpha_i(cv_i^*-u_i^*)^2(cv_i^*+u^*_i)\le0,
\end{split}
\end{equation}
where the last two inequalities are equalities if and only if $cv^*=u^*$ and $bc=1$.
This implies that
\begin{equation}\label{con1}
\left(S_u\cup S_{u,0}\setminus S_{0,0}\right)\cap\left(S_v\cup S_{v,0}\setminus S_{0,0}\right)=\emptyset,
\end{equation}
which proves \eqref{Qc}.

\noindent\underline{\emph{Step 2}} { We show that if $(\mu_u, \mu_v)\in S_u$ then $E_1$ is globally asymptotically stable. If $(\mu_u, \mu_v)\in S_u$, then $E_1$ is linearly asymptotically stable (so it is an isolated equilibrium) and $E_2$ is linearly unstable by \eqref{Qc}. By Lemma \ref{M1}, either $E_1$ is globally asymptotically stable or there exists a positive equilibrium $E\in [E_2, E_1]$. If such a positive equilibrium $E$ exists, it is stable by Theorem \ref{theorem_stable}. Then by Lemma \ref{M3}, there exists an unstable positive equilibrium in $[E, E_1]$, which contradicts Theorem \ref{theorem_stable}. This proves the global stability of $E_1$. Similarly, we can prove the case $(\mu_u, \mu_v)\in S_v$.}

\noindent\underline{\emph{Step 3}} As in \cite{he2016global}, we rule out the possibility of coexistence equilibrium in the following two cases:
\begin{enumerate}
    \item [(A)] $(\mu_u,\mu_v)\in S_{u,0}\setminus S_{0,0}$;
    \item [(B)] $(\mu_u,\mu_v)\in S_{v,0}\setminus S_{0,0}$.
\end{enumerate}
It suffices to consider case (A).
To the contrary, we assume that there exists a coexistence
equilibrium $(U^*,V^*)$ for some $(\mu_u,\mu_v)\in S_{u,0}\setminus S_{0,0}$ and $(b, c)=(b_0, c_0)$ satisfying $b_0c_0\leq 1$. So, $\lambda_1(\mu_v,q-b_0 u^*)=0$ and $\lambda_1(\mu_u,p-c_0 v^*)<0$. We define
$$
G(b, c, u, v)=
\begin{pmatrix}
\mu_u\ds\sum_{j=1}^{n}L_{1j}u_j+u_1(p_1-u_1-cv_1)\\
\vdots\\
\mu_u\ds\sum_{j=1}^{n}L_{nj}u_j+u_n(p_n-u_n-cv_n)\\
\mu_v\ds\sum_{j=1}^{n}L_{1j}v_j+v_1(q_1-bu_1-v_1)\\
\vdots\\
\mu_v\ds\sum_{j=1}^{n}L_{nj}v_j+v_n(q_n-bu_n-v_n)
\end{pmatrix}.
$$
Then we compute the Jacobian matrix of $G$ evaluated at $(b, c, u, v)=(b_0, c_0, U^*, V^*)$:
$$
DG_{(u, v)}(b_0, c_0, U^*, V^*)=
\begin{bmatrix}
\mu_u L+\text{diag}(p-2U^*-c_0V^*) & -\text{diag}(c_0U^*) \\
-\text{diag}(b_0V^*) & \mu_vL+\text{diag}(q-b_0U^*-2V^*)
\end{bmatrix}.
$$
By Theorem \ref{theorem_stable}, the principal eigenvalue of $DG_{(u, v)}(b_0, c_0, U^*, V^*)$ is negative and so all of its eigenvalues are on the {left} half plane of the complex plane. Therefore, $DG_{(u, v)}(b_0, c_0, U^*, V^*)$ is invertible. By $G(b_0, c_0, U^*, V^*)=0$ and the implicit function theorem, there exist positive solutions $(u(b,c), v(b,c))$ of $G(b, c, u, v)=0$ for $(b, c)$ close to $(b_0, c_0)$, where $(u(b,c), v(b,c))$ is continuously differentiable in $(b, c)$ with $(u(b_0, c_0), v(b_0, c_0))=(U^*, V^*)$. By the definition of $G$, $(u(b,c), v(b,c))$ is a positive equilibrium of \eqref{main1}. Noticing $\lambda_1(\mu_u,p-c_0 v^*)<0$, we may choose $(\check b, \check c)$ close to $(b_0, c_0)$ with $0<\check c<c_0$, $\check b>b_0$, and $\check b \check c\le 1$ such that \eqref{main1} has a positive equilibrium $(u(\check b, \check c), v(\check b, \check c))$ with $\lambda_1(\mu_u,p-\check c v^*)<0$. Since $\check b>b_0$, we have
$$
\lambda_1(\mu_v,q-\check b u^*)>\lambda_1(\mu_v,q-b_0 u^*)=0.
$$
 Then $(\check b, \check c)\in S_u$, which means $(u^*, 0)$ is globally asymptotically stable. This contradicts that \eqref{main1} has a positive equilibrium $(u(\check b, \check c), v(\check b, \check c))$. 

\noindent\underline{\emph{Step 4}} { If $(\mu_u,\mu_v)\in S_{u,0}\setminus S_{0,0}$, by Step 3 and Lemma \ref{M1}, $E_1$ is globally asymptotically stable; Similarly if $(\mu_u,\mu_v)\in S_{v,0}\setminus S_{0,0}$,  $E_2$ is globally asymptotically stable. 

Suppose $(\mu_u,\mu_v)\in S_-$. By Lemma \ref{M2}, there exist positive equilibiria $E^1$ and $E^2$ satisfying $E_2\ll_K E^2\le_K E^1\ll_K E_1$ such that the order interval $[E^2, E^1]$
attracts all solutions with initial data $(u_0, v_0)$ satisfying $u_0>0$ and $v_0>0$. By Theorem \ref{theorem_stable}, $E^1$ and $E^2$ are locally asymptotically stable. If $E^1\neq E^2$, by Lemma \ref{M3}, there exists an unstable equilibrium in $[E^2, E^1]$, which contradicts Theorem \ref{theorem_stable}. Therefore, $E^1=E^2$ and it is globally asymptotically stable. }


\noindent\underline{\emph{Step 5}} Finally, we show (iv). By \eqref{twoequality}, if $\lambda_1(\mu_u, p-cv^*)=\lambda_1(\mu_v, q-bu^*)= 0$ then $u^*=cv^*$ and $bc=1$, i.e.
$$S_{0,0}\subset\{(\mu_u,\mu_v):u^*=cv^*\text{ and }bc=1\}.$$
On the other hand, if $u^*=cv^*$ and $bc=1$,  from $\mu_u\ds\sum_{j=1}^n L_{ij}u_j^*+u_i^*(p_i-u_i^*)=0$, we have
$
\mu_u\ds\sum_{j=1}^n L_{ij} v_j^*+ v_i^*(p_i-c v_i^*)=0,
$
which implies  $\lambda_1(\mu_u, p-cv^*)= 0$. Similarly, $\lambda_1(\mu_v, q-bu^*)=0$. So we have 
$$
\{(\mu_u,\mu_v):u^*=cv^*\text{ and }bc=1\}\subset S_{0,0}.
$$
Hence,
\begin{equation}\label{con2}
S_{0,0}=\{(\mu_u,\mu_v):u^*=cv^*\text{ and }bc=1\}.
\end{equation}
It is easy to check that \eqref{main1} has a continuum of equilibria  \eqref{co} when $(\mu_u, \mu_v)\in S_{0, 0}$, which is a global attractor by
\cite[Theorem 3]{hirsch2006asymptotically} in the sense that every solution of \eqref{main1} converges to an equilibrium in \eqref{co}. The result in \cite[Theorem 3]{hirsch2006asymptotically} is for reaction-diffusion competition systems, which also holds for patch models.
\end{proof}

\section{{Examples}}
In this section, we apply the results obtained in Section 3 to two special situations {when the two species have the same resources availability or when the two species are identical except for resources availability}. { In these two cases, we are able to find the explicit parameter ranges that result in coexistence or competitive exclusion of the two species.}

The following result is needed later (see \cite{altenberg2012resolvent, chen2019spectral}).
\begin{lemma}\label{lemma_limit}
Suppose that $n\times n$ matrix $A$ is irreducible and quasi-positive (i.e. off-diagonal entries are nonnegative) with $s(A)=0$. Let $\eta=(\eta_1, \eta_2, ..., \eta_n)^T$ be a positive right eigenvector of $A$ corresponding to eigenvalue $s(A)=0$ and $D=\text{diag}(d_j)$ be a diagonal matrix.
Then, 
$$
\frac{d}{da} s(a A+D)\le 0, \ \text{for all } a>0,
$$
where the equality holds if and only if $D$ is a multiple of the identity matrix $I$. 
Moreover, the following limits hold:
$$
\lim_{a\rightarrow 0} s(a A+D)=\max_{1\le i\le n}\{d_i\}\ \ \ \text{and}\ \ \ \lim_{a\rightarrow \infty} s(a A+D)=\sum_{i=1}^n\eta_i d_i/\sum_{i=1}^n\eta_i.
$$
\end{lemma}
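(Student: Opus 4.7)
The plan is to dispatch the three assertions in order after a common setup. For each $a>0$ the matrix $M(a):=aA+D$ is irreducible and quasi-positive, so by Perron--Frobenius $\lambda(a):=s(M(a))$ is a simple real eigenvalue analytic in $a$, with positive right eigenvector $\phi(a)$ and positive left eigenvector $\psi(a)$ chosen smooth in $a$. Normalizing $\psi^T\phi=1$, differentiating $(aA+D)\phi=\lambda\phi$ in $a$ and pairing with $\psi^T$ (using $\psi^T(aA+D)=\lambda\psi^T$) yields the first-order formula $\lambda'(a)=\psi^T A\phi$; substituting the eigenvalue identity $aA\phi=(\lambda I-D)\phi$ recasts this as
\[
a\,\lambda'(a)=\lambda(a)-\bar d(a),\qquad \bar d(a):=\sum_{i=1}^n \phi_i(a)\psi_i(a)\,d_i,
\]
so the monotonicity claim is equivalent to $\lambda(a)\le \bar d(a)$.

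The sign is the heart of the matter. I would reduce to the Markov-generator case via the diagonal similarity $\tilde A=E^{-1}AE$ with $E=\mathrm{diag}(\eta)$: since $A\eta=0$ one checks $\tilde A\mathbf 1=0$, so $\tilde A$ is quasi-positive with zero row sums and hence a generator of an irreducible continuous-time Markov chain. Because similarity preserves the spectrum, $s(aA+D)=s(a\tilde A+D)$, and the Donsker--Varadhan variational representation for the principal eigenvalue of a Feynman--Kac generator gives
\[
s(a\tilde A+D)=\sup_{\mu\in\mathcal P}\Bigl[\sum_i d_i\mu_i-a\,I_{\tilde A}(\mu)\Bigr],
\]
with $I_{\tilde A}\ge 0$ the ($a$-independent) rate function. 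Monotonicity of $\lambda$ in $a$ is manifest. The equality case $\lambda'(a_0)=0$ forces the maximizer $\mu^*$ to satisfy $I_{\tilde A}(\mu^*)=0$ (so $\mu^*$ is the stationary distribution of $\tilde A$, strictly positive by irreducibility) and simultaneously $\sum_i d_i\mu_i^*=\max_i d_i$, which forces all $d_i$ to coincide, i.e.\ $D$ is a scalar multiple of $I$.

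For the limits, $a\to 0^+$ is immediate from continuity of the spectral bound in the matrix entries: $\lambda(0^+)=s(D)=\max_i d_i$. For $a\to\infty$ I rescale $\lambda(a)=a\,s(A+a^{-1}D)$ and apply first-order perturbation around the simple eigenvalue $s(A)=0$: for small $\varepsilon$, $s(A+\varepsilon D)=\varepsilon\,\alpha^T D\eta/(\alpha^T\eta)+o(\varepsilon)$ where $\alpha>0$ is a positive left null vector of $A$; setting $\varepsilon=a^{-1}$ gives $\lambda(a)\to \alpha^T D\eta/(\alpha^T\eta)$. In the paper's intended application, where the lemma is used with $A=L$ satisfying $\mathbf 1^T L=0$, one has $\alpha=\mathbf 1$ and the limit collapses to $\sum_i\eta_i d_i/\sum_i\eta_i$ as stated.

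The main obstacle is sign control of $\psi^T A\phi$: for symmetric $A$ one has $\psi=\phi$ and the claim reduces immediately to the Rayleigh-quotient inequality $\phi^T A\phi\le s(A)\phi^T\phi=0$, but in the asymmetric case the variational/Markov-generator reformulation above (essentially Altenberg's resolvent-positive operator argument for Karlin's theorem) is what breaks the symmetry, and the strict inequality statement requires a careful identification of the zero set of $I_{\tilde A}$.
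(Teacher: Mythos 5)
The paper offers no proof of Lemma \ref{lemma_limit} at all --- it simply cites \cite{altenberg2010karlin, chen2019spectral} --- so there is no in-paper argument to compare against; your write-up reconstructs precisely the route those references take. The common setup (simplicity and analyticity of the Perron root, the identity $a\lambda'(a)=\lambda(a)-\sum_i\phi_i\psi_i d_i$), the conjugation $\tilde A=E^{-1}AE$ with $E=\mathrm{diag}(\eta)$ turning $A$ into an irreducible generator, and the Donsker--Varadhan representation $s(a\tilde A+D)=\sup_{\mu}[\langle d,\mu\rangle-aI_{\tilde A}(\mu)]$ making monotonicity manifest, are all correct, as is the $a\to0^+$ limit.

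Two points need repair or emphasis. First, in the equality case the claim that $I_{\tilde A}(\mu^*)=0$ ``simultaneously'' gives $\langle d,\mu^*\rangle=\max_i d_i$ is a jump: from $\lambda'(a_0)=0$ you get $\mu^*=\pi$ (the stationary distribution) and $\lambda(a_0)=\langle d,\pi\rangle$; since $\lambda(a)\ge\langle d,\pi\rangle$ for every $a$ and $\lambda$ is non-increasing, $\lambda$ is constant on $[a_0,\infty)$, and only after invoking analyticity of the simple Perron root does constancy propagate to all $a>0$, whence $\langle d,\pi\rangle=\lambda(0^+)=\max_i d_i$ and, because $\pi\gg\bm 0$, all $d_i$ coincide. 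Spell that chain out (or argue more directly: equality forces the Perron vector of $a\tilde A+D$ to be constant, so $D\mathbf{1}=\lambda\mathbf{1}$). Second, your diagnosis of the $a\to\infty$ limit is correct and worth recording: perturbation theory gives $\lim_{a\to\infty}s(aA+D)=\alpha^TD\eta/(\alpha^T\eta)$ with $\alpha$ a positive \emph{left} null vector of $A$, which agrees with the stated $\sum_i\eta_id_i/\sum_i\eta_i$ only when $\alpha\propto\mathbf{1}$, i.e.\ when $A$ has zero column sums. As literally stated the lemma fails for general irreducible quasi-positive $A$ with $s(A)=0$: for $A=\bigl(\begin{smallmatrix}-1&1\\2&-2\end{smallmatrix}\bigr)$ one has $\eta=(1,1)^T$ but the limit is $(2d_1+d_2)/3$, not $(d_1+d_2)/2$. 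The paper's applications survive --- the limit is used quantitatively only with $A=L$ (where $\mathbf{1}^TL=0$), and in the application with $A=\mu L+\mathrm{diag}(p_j-u_j^*)$ only the sign of the limit matters, which is insensitive to the positive weights --- but the lemma should either carry the hypothesis $\mathbf{1}^TA=0$ or state the limit with weights $\alpha_i\eta_i$.
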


\subsection{Example (A)}
Firstly, we consider a situation that two species compete for the common resource (i.e. $p=q=r$):

\begin{equation}\label{main2}
\begin{cases}
u_i'=\mu_u\ds\sum_{j=1}^{n}L_{ij}u_j+u_i(r_i-u_i-cv_i), &i=1,\dots,n, t>0,\\
v_i'=\mu_v\ds\sum_{j=1}^{n}L_{ij}v_j+v_i(r_i-bu_i-v_i),&i=1,\dots,n, t>0, \\
u(0)=u_0\ge(\not\equiv)\mathbf0,\;v(0)=v_0\ge(\not\equiv)\mathbf0,
\end{cases}
\end{equation}
where $r>\mathbf 0$. We remark that the continuous space version of model \eqref{main2} has been investigated in \cite{zhou2018global}.
{ Then the following results can be derived from Theorem \ref{theorem-class}}.

\begin{proposition}\label{theoremA}
Suppose that $0<bc\le1$ and $(A2)-(A3)$ holds. 
Let $\theta=(\theta_1, \theta_2, ..., \theta_n)$  be a positive right eigenvector of $L$ corresponding to $s(L)=0$ with $\sum_{i=1}^n\theta_i=1$. Then the following statements hold for \eqref{main2}:
\begin{enumerate}
\item [{\rm (i)}] If $r=\delta\theta$ for some $\delta>0$, then we have:
\begin{enumerate}
\item [{\rm (i$_1$)}] If $(b,c)=(1,1)$, there exists a compact global attractor consisting of a continuum of equilibria 
$\{(\rho r,(1-\rho)r):\rho\in[0,1]\};$
\item [{\rm (i$_2$)}] If $b\ge 1$ and $c< 1$, $E_1=(w^*(\mu_u, r),\mathbf 0)$ is globally asymptotically stable;
\item [{\rm (i$_3$)}] If   $b< 1$ and $c\ge 1$, $E_2=(\mathbf 0,w^*(\mu_v, r))$ is globally asymptotically stable;
\item [{\rm (i$_4$)}] If $b<1$ and $c<1$, there exists a unique positive equilibrium 
$$
E=\ds\left(\ds\f{1-c}{1-bc}r,\ds\f{1-b}{1-bc}r\right),
$$
which is globally asymptotically stable.
\end{enumerate}
\item [{\rm (ii)}] If $r\ne \delta\theta$ for any $\delta>0$, then we have:
\begin{enumerate}
\item [{\rm (ii$_1$)}] Suppose $\mu_u<\mu_v$. Then there exist $b^*<1$ and $c^*>1$ with $b^*c^*>1$ such that  if  $b<b^*$ and $c<c^*$ \eqref{main2} has a unique positive equilibrium which is globally asymptotically stable; if $b\ge b^*$, $E_1=(w^*(\mu_u, r),\mathbf 0)$ is globally asymptotically stable; if $c\ge c^*$, $E_2=({\mathbf 0}, w^*(\mu_v, r))$ is globally asymptotically stable;

\item [{\rm (ii$_2$)}] Suppose $\mu_u>\mu_v$. Then there exist $b^*>1$ and $c^*<1$ with $b^*c^*>1$ such that  if  $b<b^*$ and $c<c^*$ \eqref{main2} has a unique positive equilibrium which is globally asymptotically stable; if $b\ge b^*$, $E_1=(w^*(\mu_u, r),\mathbf 0)$ is globally asymptotically stable; if $c\ge c^*$, $E_2=({\mathbf 0}, w^*(\mu_v, r))$ is globally asymptotically stable;

\item [{\rm (ii$_3$)}] Suppose $\mu_u=\mu_v$. Then if $b<1$ and $c<1$, \eqref{main2} has a unique positive equilibrium which is globally asymptotically stable; if $b\ge 1$ and $c< 1$,  $E_1=(w^*(\mu_u, r),\mathbf 0)$ is globally asymptotically stable; if $c\ge 1$ and $b<1$,  $E_2=(\mathbf 0,w^*(\mu_v, r))$ is globally asymptotically stable; if $(b,c)=(1, 1)$, there exists a compact global attractor consisting of a continuum of steady states
$
\{(\rho w^*(\mu_u, r),(1-\rho)w^*(\mu_v, r)):\rho\in[0,1]\}.
$
 
\end{enumerate}
\end{enumerate}
\end{proposition}
\begin{proof}
 (i) If $r=\delta\theta$ for some $\delta>0$, we have
\begin{equation*}
w^*(\mu_u, r)=w^*(\mu_v,r)=r.
\end{equation*}
For the case $b=c=1$, a direct computation yields 
\begin{equation*}
\begin{split}
&\lambda_1(\mu_v,r-b w^*(\mu_u, r))=\lambda_1(\mu_v,\mathbf 0)=0,\\
&\lambda_1(\mu_u,r-c w^*(\mu_v, r))=\lambda_1(\mu_u,\mathbf 0)=0.
\end{split}
\end{equation*}
Then it follows from Theorem \ref{theorem-class} (iv) that  (i$_1$) holds.

For the case $b\ge 1$ and $c<1$,  we have
 \begin{equation*}
\begin{split}
&\lambda_1(\mu_u,r-b w^*(\mu_v, r))=\lambda_1(\mu_u,(1-b)r)\ge 0,\\
&\lambda_1(\mu_u,r-c w^*(\mu_v, r))=\lambda_1(\mu_u,(1-c)r)<0.
\end{split}
\end{equation*}
Then it follows from Theorem \ref{theorem-class} (i) that $E_1=(w^*(\mu_u, r),\mathbf 0)$ is globally asymptotically stable, which proves (i$_2$).
Similarly, we can prove (i$_3$).

For the case $b<1$ and $c<1$, we have
 \begin{equation*}
\begin{split}
&\lambda_1(\mu_v,r-bw^*(\mu_u, r))=\lambda_1(\mu_v,(1-b)r)<0,\\
&\lambda_1(\mu_u,r-c w^*(\mu_v, r))=\lambda_1(\mu_u,(1-c)r)<0.
\end{split}
\end{equation*}
Then it follows from  Theorem \ref{theorem-class} (iii) that statement (i$_4$) holds.

 (ii) Suppose $r\ne \delta \theta$ for any $\delta>0$. Then $r-w^*(\mu_u, r)\ne \gamma (1,\dots,1) $ for any $\gamma\in \mathbb R$. If this is not true,
then there exists $\gamma_0\in\mathbb R$ such that $r-w^*(\mu_u, r)= \gamma_0 (1,\dots,1) $. Since $w^*:=w^*(\mu_u, r)$ satisfies 
$$
\mu_u \sum_{j=1}^n L_{ij}w^*_j+w^*_i(r_i-w^*_i)=0,
$$
we conclude that $-\gamma_0/\mu_u$ is the principal eigenvalue of $L$ with $w^*(\mu_u, r)$ being a positive eigenvector. Then $\gamma_0=0$, and $r=w^*(\mu_u, r)=\delta_0\theta$ for some $\delta_0>0$, which is a contradiction. Similarly, we obtain that $r-w^*(\mu_v, r)\ne \gamma (1,\dots,1) $ for any $\gamma\in \mathbb R$.
Then it follows from Lemma \ref{lemma_limit} that 
$\la_1\left(\mu, r-w^*(\mu_u, r)\right)$ and $\la_1\left(\mu, r-w^*(\mu_v, r)\right)$ {are} strictly increasing for $\mu\in(0,\infty)$.

Note that 
\begin{equation*}
\la_1\left(\mu_u, r-w^*(\mu_u, r)\right)=0\;\; \text{and}\;\;\la_1\left(\mu_v, r-w^*(\mu_v, r)\right)=0.
\end{equation*}
(ii$_1$) If $\mu_u<\mu_v$, we have
\begin{equation*}
\la_1\left(\mu_v, r-bw^*(\mu_u, r)\right)|_{b=1}>0\;\; \text{and}\;\;\la_1\left(\mu_u, r-cw^*(\mu_v, r)\right)|_{c=1}<0.
\end{equation*}
Since $\la_1\left(\mu_v, r-bw^*(\mu_u, r)\right)|_{b=0}<0$ and $\la_1\left(\mu_v, r-bw^*(\mu_u, r)\right)$ is strictly increasing in $b$,  there exists $b^*\in(0,1)$ such that 
\begin{equation}\label{bstr}
\la_1\left(\mu_v, r-bw^*(\mu_u, r)\right)\  \begin{cases}
<0,&b<b^*,\\
=0,&b=b^*\\
>0,&b>b^*.
\end{cases}
\end{equation}
Since $\la_1\left(\mu_u, r-cw^*(\mu_v, r)\right)$ is strictly increasing in $c$ and $\lim_{c\rightarrow\infty}\la_1\left(\mu_u, r-cw^*(\mu_v, r)\right)=\infty$, there exists $c^*>1$ such that 
\begin{equation}\label{cstr}
\la_1\left(\mu_u, r-cw^*(\mu_v, r)\right)\ 
\begin{cases}
<0,&c<c^*,\\
=0,&c=c^*\\
>0,&c>c^*.
\end{cases}
\end{equation}

We claim  $b^*c^*>1$. To see it, we first note that $b^*c^*<1$ is not possible. If otherwise, we may find $(b, c)$ such that $b<b^*$ and $c<c^*$ with $bc<1$. For such $(b, c)$, we have $\la_1(\mu_v, r-bw^*(\mu_u, r))<0$ and $\la_1(\mu_u, r-cw^*(\mu_v, r))<0$, i.e. both $E_1$ and $E_2$ are stable, which is impossible by Theorem \ref{theorem-class}. 

Suppose to the contrary that $b^*c^*=1$. Since $\la_1(\mu_v, r-b^*w^*(\mu_u, r))=\la_1(\mu_u, r-c^*w^*(\mu_v, r))=0$, we have $w^*(\mu_u, r)=c^*w^*(\mu_v, r)$ by Theorem \ref{theorem-class} (iv). Putting this into 
\begin{eqnarray*}
\mu_u \sum_{j=1}^n L_{ij}w^*_j(\mu_u, r)+w^*_i(\mu_u, r)(r_i-w^*_i(\mu_u, r))=0,\\
\mu_v \sum_{j=1}^n L_{ij}w^*_j(\mu_v, r)+w^*_i(\mu_v, r)(r_i-w^*_i(\mu_v, r))=0,
\end{eqnarray*}
we obtain 
$$
w^*_i(\mu_v, r)=\frac{\mu_v-\mu_u}{c^*\mu_v-\mu_u}r_i<r_i \ \ \text{for all} \ i=1,...,n.
$$  
Therefore, noticing $\sum_{i=1}^n L_{ij}=0$ for all $j=1,...,n$, we have
$$0=\sum_{i=1}^n (\mu_v \sum_{j=1}^n L_{ij}w^*_j(\mu_v, r)+w^*_i(\mu_v, r)(r_i-w^*_i(\mu_v, r)))=\sum_{i=1}^n w^*_i(\mu_v, r)(r_i-w^*_i(\mu_v, r))>0,
$$
which is a contradiction. This proves the claim. It follows from \eqref{bstr}-\eqref{cstr}, $b^*c^*>1$ and Theorem \ref{theorem-class}  that (ii$_1$) holds. 

Using similar arguments for (ii$_1$), we can prove (ii$_2$). For the case (ii$_3$), we have $b^*=c^*=1$ and its proof is similar to (ii$_1$). 
\end{proof}

\begin{figure}[htbp]
 \centering
  \includegraphics[width=0.6\textwidth]{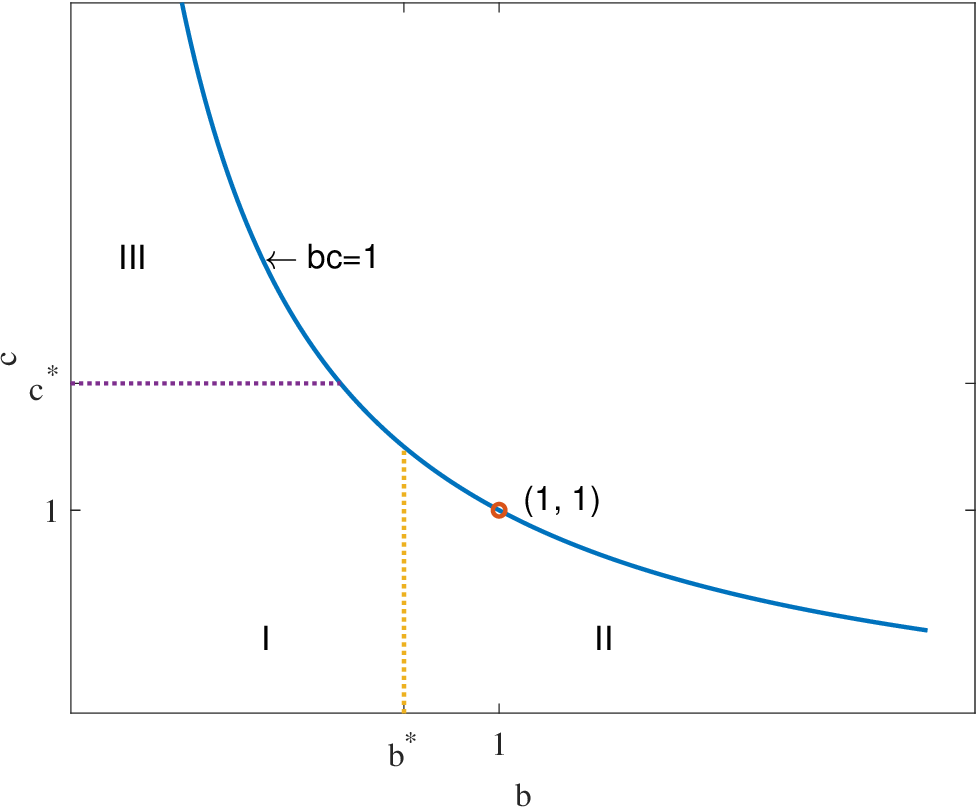}
 \caption{Illustration of {Proposition} \ref{theoremA} \rm (ii$_1$)  ($r\neq \delta\theta$ and $\mu_u<\mu_v$) for Example A. Here, $b^*<1$, $c^*>1$, and $b^*c^*>1$. In {regime} I,  there exists a globally asymptotically stable positive equilibrium; in {regime}  II, $E_1$ is globally asymptotically stable; in {regime}  III, $E_2$ is globally asymptotically stable. }
 \label{FigA}
\end{figure}


\begin{remark}
We have a complete classification of the global stability of Example A by {Proposition} \ref{theoremA}. In Figure \ref{FigA}, we plot a diagram to illustrate our results for the case \rm (ii$_1$) (i.e. $r\neq \delta\theta$ and $\mu_u<\mu_v$).  
\end{remark}

\subsection{Example (B)}

In this subsection, we consider a special case of \eqref{main1}:
\begin{equation}\label{main3}
\begin{cases}
u_i'=\mu\ds\sum_{j=1}^{n}L_{ij}u_j+u_i(p_i-u_i-v_i), &i=1,\dots,n, t>0,\\
v_i'=\mu\ds\sum_{j=1}^{n}L_{ij}v_j+v_i(q_i-u_i-v_i),&i=1,\dots,n, t>0, \\
u(0)=u_0\ge(\not\equiv)\mathbf 0,\;v(0)=v_0\ge(\not\equiv)\mathbf0.
\end{cases}
\end{equation}
Here the two species have the same intraspecific competition coefficients and diffusion rates, but different resources availability. 
The case $n=2$ (two-patch model) of \eqref{main3} has been investigated by \cite{cheng2019coexistence,gourley2005two,lin2014global}. In \cite{gourley2005two}, it was conjectured that if $q_1-\sigma=p_1<q_1<q_2<p_2=q_2+\sigma$ with $0<\sigma<q_1$, then $u_1$ out-competes $u_2$ when the dispersal rate $\mu$ is large while $u_1$ and $u_2$ coexist when $\mu$ is small. This conjecture was verified in \cite{lin2014global}, where the authors proved that a threshold value of $\mu^*$ dividing the outcome of the competition exists: if $\mu<\mu^*$ the model has a unique globally asymptotically stable positive equilibrium while if $\mu>\mu^*$ the $u_1$-only semitrivial equilibrium is globally asymptotically stable. In \cite{cheng2019coexistence}, it was shown that such a threshold result no longer holds when the inequalities on the birth rates $p_i, q_i$ are relaxed. We consider the general case with $n\ge 2$ here.

The following result states that the species with more resources  wins the competition:
\begin{proposition}
Suppose that (A1)-(A3) hold. If $p> q$, then $E_1=(w^*(\mu, p),\mathbf 0)$ is globally asymptotically stable for \eqref{main3}; if $p<q$, then $E_2=(\mathbf 0,w^*(\mu, q))$ is globally asymptotically stable  for \eqref{main3}.
\end{proposition}
\begin{proof}
We only show the case of $p> q$.  For simplicity of notations, we denote $u^*=w^*(\mu, p)$ and $v^*=w^*(\mu,q)$. 
By Theorem \ref{theorem-class}, it suffices to consider the signs of $\lambda_1(\mu, q-u^*)$ and $\lambda_1(\mu, p-v^*)$, where $u^*$ and $v^*$ also depend on $\mu$. Let $\phi$ be a positive eigenvector corresponding to  $\lambda_1(\mu, q-u^*)$. Then
\begin{equation}
\mu\sum_{j=1}^n L_{ij}\phi_j+(p_i-u^*_i)\phi_i+(q_i-p_i)\phi_i+\lambda_1(\mu, q-u^*)\phi_i=0.
\end{equation}
Denote $A=\mu L+\text{diag}(p_j-u^*_j)$ and $D=\text{diag}(q_j-p_j)$. Clearly, $u^*$ is a positive right eigenvector of $A$ corresponding with eigenvalue $0$. So $s(A)=0$. Therefore, by Lemma \ref{lemma_limit}, $s(a A+D)$ is strictly decreasing in $a$ and 
$$
\lim_{a\rightarrow 0} s(a A+D)=\max_{1\le j\le n}\{q_j-p_j\}\le0 \ \  \text{and} \ \  \lim_{a\rightarrow \infty} s(a A+D)=\sum_{i=1}^n\eta_i (q_i-p_i)<0,
$$
where $\eta=(\eta_1,\dots,\eta_n)$  is the positive right eigenvector of $A$ corresponding to $s(A)$ with $\sum_{i=1}^n\eta_i=1$.
This yields 
\begin{equation}\label{estiu}\lambda_1(\mu, q-u^*)=-s(A+D)>0.
\end{equation}
Similarly, we can prove that $\lambda_1(\mu, p-v^*)<0.$
 This, combined with \eqref{estiu}, implies that
 $(\mu,\mu)\in S_u\cup S_{u,0}\setminus S_{0,0},$
 and consequently,  $E_1$ is globally asymptotically stable by Theorem \ref{theorem-class}.
\end{proof}

Next we consider the case when the resources of the two competing species are not comparable. 

\begin{proposition}\label{theorem_limit}
Suppose that (A1)-(A3) hold, and  $p\not\ge q$ and $q\not\ge p$. Let $\theta=(\theta_1, \theta_2, ..., \theta_n)$  be a positive right eigenvector of $L$ corresponding to $s(L)=0$ satisfying $\sum_{i=1}^n\theta_i=1$. Then the following statements hold for \eqref{main3}:
\begin{enumerate}
    \item There exists $\mu_1>0$ such that  \eqref{main3} has a unique positive equilibrium which is globally asymptotically stable for $0<\mu<\mu_1$.
    \item If $\ds\sum_{j=1}^n\theta_j(p_j-q_j)>0$, then there exists $\mu_2>\mu_1$ such that $E_1$ is globally asymptotically stable for $\mu>\mu_2$; on the other hand if $\ds\sum_{j=1}^n\theta_j(p_j-q_j)<0$, then there exists $\mu_3>\mu_1$ such that $E_2$ is globally asymptotically stable for $\mu>\mu_3$.
\end{enumerate}
\end{proposition}
\begin{proof}
We claim that there exist { two positive numbers $m$ and $M$} such that $m \theta\le  u^* \le M \theta$ for all $\mu>0$. To see that, we may choose $m, M>0$  such that $u=m \theta$ is a lower solution and $u=M \theta$ is an upper solution of
\begin{equation}\label{ustart}
u_i'=\mu\ds\sum_{j=1}^{n}L_{ij}u_j+u_i(p_i-u_i), \ \ i=1,\dots,n.
\end{equation}
Since $u^*$ is the unique globally asymptotically stable positive equilibrium of \eqref{ustart}, we have $m \theta\le  u^* \le M \theta$ for all $\mu>0$.

1. Since $u^*$ is the unique positive equilibrium of \eqref{ustart},
we have $u^*_i\rightarrow p_i$ as $\mu\rightarrow 0$. This can be seen by taking $\mu\rightarrow 0$ in \eqref{ustart} and noticing $u^*\ge m \theta$.
By Lemma \ref{lemma_limit} and $\lim_{\mu\rightarrow 0} u^*=p$, we have $\ds\lim_{\mu\rightarrow 0}\lambda_1(\mu, q-u^*)=-\max_{1\le i\le n}\{q_i-p_i\}<0$. Similarly, $\ds\lim_{\mu\rightarrow 0}\lambda_1(\mu, p-v^*)=-\max_{1\le i\le n}\{p_i-q_i\}<0$. Therefore, there exists $\mu_1>0$ such that $\lambda_1(\mu, q-u^*), \lambda_1(\mu, p-v^*)<0$ for all $0<\mu<\mu_1$. Therefore by Theorem \ref{theorem-class}, \eqref{main3} has a unique positive equilibrium which is globally asymptotically stable for $0<\mu<\mu_1$.

2. We first claim:
\begin{equation}\label{uinf}
   \lim_{\mu\rightarrow\infty} u^*= \frac{\sum_{i=1}^n\theta_i p_i}{\sum_{i=1}^n\theta_i^2}\theta.
\end{equation}
To see that, for any $\mu_k\rightarrow\infty$, there exists a subsequence,  still denoting by itself, such that the corresponding positive solution $u_k^*=(u_{1k},..., u_{nk})$ of \eqref{ustart} satisfies $ u_k^*\rightarrow u^\infty\ge 0$ as $k\rightarrow\infty$. Dividing both sides of \eqref{ustart} by $\mu_k$ and taking $k\rightarrow\infty$, we obtain that $\ds\sum_{j=1}^n L_{ij}u^{\infty}_j=0$, which means that $u^{\infty}=l\theta$ for some $l\ge 0$. Summing up all the $n$ equations in \eqref{ustart} and noticing $\ds\sum_{i=1}^n L_{ij}=0$ for all $1\le j \le n$, we have $\ds\sum_{i=1}^n u_{ik}(p_i-u_{ik})=0$. Taking $k\rightarrow\infty$, we have $\ds\sum_{i=1}^n l\theta_i(p_i-l\theta_i)=0$. Since $l>0$ by $u^*\ge m \theta$, we have $l= {\sum_{i=1}^n\theta_i p_i}/{\sum_{i=1}^n\theta_i^2}$. This proves \eqref{uinf}.

Then by Lemma \ref{lemma_limit} and \eqref{uinf}, we have
$\ds\lim_{\mu\rightarrow\infty} \lambda_1(\mu, q-u^*)={\sum_{i=1}^n\theta_i (p_i-q_i)}.$
Similarly,
$\ds\lim_{\mu\rightarrow\infty} \lambda_1(\mu, p-v^*)={\sum_{i=1}^n\theta_i (q_i-p_i)}.
$
If $\ds\sum_{j=1}^n\theta_j(p_j-q_j)>0$, there exists $\mu_2>0$ such that $\lambda_1(\mu, q-u^*)>0$ and $ \lambda_1(\mu, p-v^*)<0$ for all $\mu>\mu_2$. Therefore, by Theorem \ref{theorem-class}, $E_1$ is globally asymptotically stable for $\mu>\mu_2$. The case $\ds\sum_{j=1}^n\theta_j(p_j-q_j)<0$ can be proved similarly.
\end{proof}

\begin{remark}
By {{Proposition}} \ref{theorem_limit}, one may expect that there exists a critical value $\mu^*$ such that if $\mu<\mu^*$, \eqref{main3} has a unique globally stable positive equilibrium while if $\mu>\mu^*$ either $E_1$ or $E_2$ is globally asymptotically stable for \eqref{main3}. However, this result does not hold in general. Indeed, in \cite{cheng2019coexistence}, it has {been} shown that for $n=2$ there might exist $0<\mu_1^*<\mu_2^*<\mu^*_3$ such that \eqref{main3} has a globally asymptotically stable positive equilibrium exactly when $\mu\in (0, \mu_1^*)\cup(\mu_2^*, \mu_3^*)$.
\end{remark}

The following result characterizes the asymptotic limit of the unique positive equilibrium of \eqref{main3} when the diffusion rate $\mu$ approaches zero.
\begin{proposition}
Suppose that (A1)-(A3) hold. Let $\Omega_u=\{i: 1\le i\le n, p_i>q_i\}$ and $\Omega_v=\{i: 1\le i\le n, p_i<q_i\}$. Suppose that $\Omega_u$ and $\Omega_v$ are not empty with $\Omega_u\cup\Omega_v=\{1, 2, ..., n\}$.
Let $u_0=(u_{01}, u_{02}, ..., u_{0n})$ and $v_0=(v_{01}, v_{02}, ..., v_{0n})$, where
\begin{equation*}
    u_{0i}=
\left\{
\begin{array}{cc}
  p_i,   & \text{if} \ i\in \Omega_u,  \\
    0, & \text{if} \ i\in \Omega_u,
\end{array}
\right.
\ \ \ \ \text{and}\ \ \ \
    v_{0i}=
\left\{
\begin{array}{cc}
  0   & \text{if} \ i\in \Omega_u,  \\
    q_i & \text{if} \ i\in \Omega_u.
\end{array}
\right.
\end{equation*}
Let $(u, v)$ be the unique positive equilibrium of \eqref{main3} when $\mu$ is small, then $\ds\lim_{\mu\rightarrow 0}(u, v)=(u_0, v_0)$.
\end{proposition}
\begin{proof}
By {Proposition} \ref{theorem_limit}, there exists $\mu_1>0$ such that \eqref{main3} has a unique positive equilibrium $(u, v)$ that is globally asymptotically stable. By the definition, when $\mu=0$, $(u_0, v_0)$ is a solution of the following system
\begin{equation}\label{sss}
\begin{cases}
\mu\ds\sum_{j=1}^{n}L_{ij}u_j+u_i(p_i-u_i-v_i)=0, &i=1,\dots,n,\\
\mu\ds\sum_{j=1}^{n}L_{ij}v_j+v_i(q_i-u_i-v_i)=0,&i=1,\dots,n.
\end{cases}
\end{equation}

We show that \eqref{sss} has a continuum of solutions emanating $(\mu, (u, v))=(0, (u_0, v_0))$. To see this, we define
$$
F(\mu, (u, v))=
\begin{pmatrix}
\mu\ds\sum_{j=1}^{n}L_{1j}u_j+u_1(p_1-u_1-v_1)\\
\vdots\\
\mu\ds\sum_{j=1}^{n}L_{nj}u_j+u_n(p_n-u_n-v_n)\\
\mu\ds\sum_{j=1}^{n}L_{1j}v_j+v_1(q_1-u_1-v_1)\\
\vdots\\
\mu\ds\sum_{j=1}^{n}L_{nj}v_j+v_n(q_n-u_n-v_n)
\end{pmatrix}.
$$
Then we compute the Jacobian matrix of $F$ evaluated at $(\mu, (u, v))=(0, (u_0, v_0))$:
$$
DF_{(u, v)}(0, (u_0, v_0))=
\begin{bmatrix}
\text{diag}(p-2u_0-v_0) & -\text{diag}(u_0) \\
-\text{diag}(v_0) & \text{diag}(q-u_0-2v_0)
\end{bmatrix}.
$$
By the assumption $p_i\neq q_i$ for all $i$ and the definition of $u_0$ and $v_0$, we can see that $DF_{(u, v)}(0, (u_0, v_0))$ is invertible. Therefore, by the implicit function theorem, there exists $\mu^*_1>0$ such that \eqref{sss} has a solution $(u(\mu), v(\mu))$ for each $0\le \mu< \mu^*_1$, where $(u(\mu), v(\mu))$ is continuous in $\mu$. By the definition of $(u_0, v_0)$, we may choose $\mu^*_1$ small such that $u_i(\mu)>0$ for each $i\in\Omega_u$ and $v_i(\mu)>0$ for each $i\in\Omega_v$ for all $0\le \mu \le \mu^*_1$.

We show that $u_i(\mu)>0$ for each $i\in\Omega_v$ and $v_i(\mu)>0$ for each $i\in\Omega_u$ for $\mu$ close to zero. To see that, fix $i_0\in \Omega_v$. Then, $p_{i_0}<q_{i_0}$, $u_{0i_0}=0$ and $v_{0i_0}=q_{i_0}$.  Differentiating $\mu\ds\sum_{j=1}^{n}L_{i_0j}u_j+u_{i_0}(p_{i_0}-u_{i_0}-v_{i_0})=0$ with respect to $\mu$ and evaluating at $(\mu, (u, v))=(0, (u_0, v_0))$, we obtain
$$
u'_{i_0}(0)=\frac{\sum_{j=1}^{n}L_{i_0j}u_{0j}}{q_{i_0}-p_{i_0}}= \frac{\sum_{j\in \Omega_u}L_{i_0j}u_{0j}}{q_{i_0}-p_{i_0}}.
$$
By the assumption, $\ds\sum_{j\in \Omega_u}L_{i_0j}>0$. So $u'_{i_0}(0)>0$. Therefore, $u_{i_0}(\mu)\approx u_{i_0}(0)+u'_{i_0}(0)\mu>0$ for $\mu$ close to zero. Since $i_0\in \Omega_v$ was arbitrary, $u_i(\mu)>0$ for each $i\in\Omega_v$ when $\mu$ is close to zero. Similarly,  $v_i(\mu)>0$ for each $i\in\Omega_u$ when $\mu$ is close to zero. 

We can find $\mu^*<\mu^*_1$ such that the solution $(u(\mu), v(\mu))$ of \eqref{sss} is positive for  $0< \mu <\mu^*$. Then the conclusion follows from the uniqueness of the positive solution of \eqref{sss} and the continuity of $(u(\mu), v(\mu))$ in $\mu$.
\end{proof}

\section{Conclusion}
{ In this paper, we analyze the global dynamics of a Lotka-Volterra competition model in patchy environment with asymmetric dispersal. Under the assumption of weak competition and the weighted digraph of the connection matrix is strongly connected and cycle-balanced, we classify the global dynamics of the model. In particular, in Theorem \ref{theorem-class}, we show that either the model has a globally stable coexistence steady state or one species competitively exclude the other one except for the special case that  both semi-trivial equilibria are neutrally stable. Theorem \ref{theorem-class} has been applied to two special cases, in which we are able to determine the explicit parameter ranges for coexistence verse competitive exclusion.}

{ Our results use techniques from two fields. We use matrix theory and graph theory techniques to deal with the asymmetry of the connection matrix $L$. Due to the limitation of this method, the weighted digraph of $L$ needs to be cycle-balanced. Though cycle-balanced digraph covers some important types of the configurations of patches,  it is desirable to see whether this condition can be removed. We conjecture that Theorem \ref{theorem-class}  still holds without this technical assumption. 

The second technique that we rely on is the well-developed monotone dynamical system theory. According to this theory, the dynamics of the model is essentially determined by the local dynamics of the equilibria. An essential step in our analysis is to prove that  every coexistence steady state is locally asymptotically stable except for the special case that both semi-trivial equilibria are neutrally stable. This approach has been adopted in two recent articles \cite{he2016global,zhou2018global} on Lotka-Volterra reaction-diffusion competition models. We remark that the patch model may not be simply regarded as the discretization of the reaction-diffusion model as the connection matrix $L$ is not assumed to be symmetric. 

Finally, we want to point out that we only consider the weak competition case in the paper, and the dynamics of the strong competition case remains an open problem. }

\section*{Appendix}

\begin{proof}[\underline{Proof of Proposition~\ref{tree}}]
1. If $A$ is symmetric, then $\mathcal{G}$ is clearly cycle-balanced from the definition. Now we assume that ${\mathcal G}$ is cycle-balanced. Suppose $a_{ij}>0$; that is, there is an arc $(j,i)$ from vertex $j$ to vertex $i$ in $\mathcal{G}$. Since  $\mathcal{G}$ is strongly connected, there exists a path from $i$ to $j$. Therefore, the arc $(j,i)$ belongs to some cycle $\mathcal{C}$. If  $\mathcal{G}$ is cycle-balanced, then for any cycle $\mathcal{C}$, its reverse $-\mathcal{C}$ is also a cycle in $\mathcal{G}$. This implies $a_{ji}>0$.   Hence $A$ must be sign pattern symmetric.  
 
2. Since $\mathcal{G}$ has $n$ vertices and it is strongly connected, any of its spanning tree has $n-1$ arcs. From part 1, any reverse arc is also an arc of $\mathcal{G}$, thus  $\mathcal{G}$ has at least $2(n-1)$ arcs. 

3. Let $A$ be a sign pattern symmetric $n\times n$  matrix with exactly $2(n-1)$ positive entries, and assume $A$ is irreducible. Let $A^+=(a_{ij}^+)_{n\times n}$ be defined by $a_{ij}^+=a_{ij}$ when $i>j$ and $a_{ij}^+=0$ when $i\leq j$, then the subdigraph $\mathcal{G}^+$ associated with $A^+$ is a tree. Similarly let $A^-=(a_{ij}^{-})_{n\times n}$ be defined by $a_{ij}^{-}=a_{ij}$ when $i<j$ and $a_{ij}^{-}=0$ when $i\geq j$, then the subdigraph $\mathcal{G}^{-}$ associated with $A^{-}$ is also a tree. The digraph $\mathcal{G}$ is the union of two disjoint trees $\mathcal{G}^+$ and $\mathcal{G}^-$. It is easy to see every cycle of $\mathcal{G}$ has exactly two vertices, and $\mathcal{G}$ is cycle-balanced as every $2$-cycle is naturally balanced. This proves such a bi-directional tree is cycle-balanced. If $n=2$ then every cycle of $\mathcal{G}$ has two vertices. Hence it must be cycle-balanced.

Now assume $\mathcal{G}$ is strongly connected, every cycle of $\mathcal{G}$ has exactly two vertices and $\mathcal{G}$ is cycle-balanced. From part 1, $A$ is sign pattern symmetric. So we only need to prove $\mathcal{G}$ has exactly $2(n-1)$ arcs. Let $\mathcal{T}$ be a spanning tree of $\mathcal{G}$, then $\mathcal{T}$ has $n-1$ arcs. Reversing the directions of all arcs in $\mathcal{T}$ yields $-\mathcal{T}$, which is also a subdigraph of $\mathcal{G}$. This implies that  $\mathcal{G}$ has at least $2(n-1)$ arcs, $n-1$ arcs in $\mathcal{T}$ and $n-1$ arcs in $-\mathcal{T}$. If in addition to the $2(n-1)$ arcs in the spanning tree $\mathcal{T}$ and its reverse $-\mathcal{T}$, there exists at least one more arc, say $(i_a,i_b)$, which is not in $\mathcal{T}\cup (-\mathcal{T})$. But there is a path $P$ from $i_b$ to $i_a$ in $\mathcal{T}\cup (-\mathcal{T})$ because of the property of spanning tree. The length of $P$ is at least $2$ as $(i_a,i_b)\not\in \mathcal{T}\cup (-\mathcal{T})$, so the union of $P$ and $(i_a,i_b)$ is a cycle with length at least $3$, which contradicts with the assumption that every cycle of $\mathcal{G}$ has exactly two vertices. Therefore $\mathcal{G}$ has exactly $2(n-1)$ arcs.

4. Assume that $\mathcal{G}$ is complete with at least $3$ vertices. If $\mathcal{G}$ is cycle-balanced, it is obvious that each $3$-cycle is balanced. So we only need to prove that if each  $3$-cycle is balanced, then each $k$-cycle with $k\geq 4$ is also balanced. We prove it inductively in $k$. When $k=3$, it is true from the assumption. Suppose it is true for any $k$-cycle with $k\leq m$, we show it is true for $k=m+1$. Let $\mathcal{C}$ be a cycle with length $m+1$. Without loss of generality, we assume that $\mathcal{C}=(1,2,\cdots,m,m+1,1)$, namely, a cycle connecting vertices $1, 2, \cdots, m, m+1, 1$ consecutively. Since  $\mathcal{G}$ is complete, we have $a_{m1}>0$ and $a_{1m}>0$. From the inductive hypothesis, the $m$-cycle ${\mathcal C}_1=(1,2,\cdots,m-1,m,1)$ and the $3$-cycle ${\mathcal C}_2=(1,m,m+1,1)$ are both balanced. Hence 
$a_{m1}a_{1m}w(\mathcal{C})=w(\mathcal{C}_1)w(\mathcal{C}_2)=w(-\mathcal{C}_1)w(-\mathcal{C}_2)=a_{m1}a_{1m}w(-\mathcal{C})$, which implies that $w(\mathcal{C})=w(-\mathcal{C})$. 
\end{proof}

\bibliographystyle{abbrv}
\bibliography{references,comp}

\end{document}